\documentclass[11pt,reqno]{amsart}

\usepackage[foot]{amsaddr}
\usepackage{amsmath,amssymb,amsthm}
\usepackage{hyperref}
\usepackage{booktabs}
\usepackage{multirow}
\usepackage{enumerate}
\usepackage{bm}
\usepackage{tikz}
\usetikzlibrary{arrows,shapes}
\tikzset{>=stealth}

\usepackage[maxbibnames=99,
style=numeric,
backend=bibtex,
sorting=nyt,
sortcites,
doi = false,
isbn = false,
url = false,
natbib=true]{biblatex}

\oddsidemargin  0pt
\evensidemargin 0pt
\marginparwidth 40pt
\marginparsep 0pt
\topmargin 10pt
\headsep 10pt
\textheight 8.7in
\textwidth 6.6in

% amsthm
\theoremstyle{plain}
\newtheorem{lemma}{Lemma}

\newtheorem{proposition}{Proposition}
\newtheorem{corollary}{Corollary}

\theoremstyle{definition}

\newtheorem{example}{Example}
\newtheorem{problem}{Problem}

\theoremstyle{remark}

\newcommand{\reals}{\mathbb{R}}
\newcommand{\ints}{\mathbb{Z}}

\newcommand{\vect}[1]{\bm{#1}}

\let\leq\leqslant
\let\geq\geqslant
\let\eps\varepsilon

\newcommand{\Aout}{A_{\text{out}}}
\newcommand{\Ain}{A_{\text{in}}}
\newcommand{\Aoff}{A_{\text{off}}}
\newcommand{\Aon}{A_{\text{on}}}
\newcommand{\Z}{Z(n,\vect\alpha,\vect\beta,\vect\gamma,\vect\delta)}

\DeclareMathOperator{\conv}{conv}
\DeclareMathOperator{\proj}{proj}

\title{Tight MIP formulations for bounded length cyclic sequences}

\author[Thomas Kalinowski]{Thomas Kalinowski$^{1,3}$}
\address{$^1$School of Science and Technology, University of New England, Armidale, NSW 2351, Australia}
\author[Tomas Lid\'en]{Tomas Lid\'en$^2$}
\address{$^2$Department of Science and Technology, Link\"oping University, Norrk\"oping SE-601 74, Sweden}
\author[Hamish Waterer]{Hamish Waterer$^3$}
\address{$^3$School of Mathematical \& Physical Sciences, University of Newcastle, NSW 2308, Australia}

\email{tomas.liden@liu.se}
\email{tkalinow@une.edu.au}
\email{hamish.waterer@newcastle.edu.au}

\date{}

\keywords{Production sequencing; Bounded up/down times; Extended formulations; Convex hulls}
\subjclass[2010]{90C11, 90C27, 90C35, 90C57}

\addbibresource{../references.bib}

\begin{document}

\begin{abstract}
  We study cyclic binary strings with bounds on the lengths of the intervals of consecutive ones and
  zeros. This is motivated by scheduling problems where such binary strings can be used to represent
  the state (on/off) of a machine. In this context the bounds correspond to minimum and maximum
  lengths of on- or off-intervals, and cyclic strings can be used to model periodic
  schedules. Extending results for non-cyclic strings is not straight forward. We present a
  non-trivial tight compact extended network flow formulation, as well as valid inequalities in the
  space of the state and start-up variables some of which are shown to be facet-defining. Applying a
  result from disjunctive programming, we also convert the extended network flow formulation into an
  extended formulation over the space of the state and start-up variables.
\end{abstract}

\maketitle

\section{Introduction}
In scheduling problems it is often natural to use time-indexed
binary variables to model the availability of resources, such as the state of machines (on/off) or
roster patterns for the workforce. In these contexts there are often bounds on the lengths of on- and
off-intervals, and there is a significant literature on mixed integer programming formulations for
this~\cite{Frangioni2006,Frangioni2009,Gentile2017,Hedman2009,Lee2004,Malkin2003,QueyranneWolsey2017}. In
particular, \citet{Malkin2003} showed that for lower bounds on the lengths of on- and off-intervals,
the valid inequalities that can be found in~\cite{Wolsey1998} are sufficient to describe the convex
hull in the space of the state and start-up variables. \citet{PochetWolsey2006} give the convex hull
for the case of constant upper and lower bounds, and this was generalized
by~\citet{QueyranneWolsey2017} who considered upper and lower bounds, and allowed these bounds to
vary over time. They present a tight extended network formulation, and obtain the convex hull in
the space of the state and start-up variables via a projection from a different path formulation.

Our work is motivated by applications in the scheduling of railway maintenance~\cite{Liden2015},
where it is required in some situations that schedules are cyclic. For this reason, we let the
sequence of state variables ``wrap around'' the time horizon and apply the bounds on the lengths of
on- or off-intervals also to intervals that start in the end of the time horizon and continue in the
beginning. A more formal problem description is provided in Section~\ref{sec:problem}. In
Section~\ref{sec:flow}, we follow the approach from~\cite{QueyranneWolsey2017} to derive a compact
extended network flow formulation. It turns out that the straightforward cyclic variant of the
network formulation from\cite{QueyranneWolsey2017} does not lead to an integral polytope in the
space of the flow variables, but we can obtain an integral network flow formulation by considering a
larger network that arises from exploiting a simple disjunction. In Section~\ref{sec:yz-formulation}
we study a cyclic variant of the Queyranne/Wolsey formulation in the space of the state and start-up
variables. We prove that it is a valid formulation, but in contrast to the non-cyclic case the
polytope is not integral. For the case that the bounds on the interval lengths are constant over
time we provide some valid inequalities, and give sufficient conditions for them to be
facet-defining. We also use a result from disjunctive programming to derive an extended formulation
for the convex hull in the space of the state and start-up variables.  Finally, in
Section~\ref{sec:conclusion} we describe some directions for further investigations.

\section{Problem description}\label{sec:problem}
Throughout this paper, we denote the set $\{a,a+1,\dots,b\}$ for integers $a\leq b$ by $[a,b]$. Let
the time horizon be indexed by $[0,n-1]$ with the convention that time is added modulo $n$, that is,
$0$ is the time period after $n-1$. For integers $a$ and $b$ with $0\leq b<a<n$ representing time
periods we let the interval wrap around in the natural way, that is,
$[a,b]=\{a,a+1,\dots,n-1,0,1,\dotsc,b\}$.

As in~\cite{QueyranneWolsey2017}, we consider parameters $(\vect\alpha,\,\vect\beta,\,\vect\gamma,\,\vect\delta)\in\ints^{4n}$ that impose bounds on the length of on- and off-intervals in the following way:
\begin{itemize}
\item $\alpha_t\in[1,n-1]$ is a lower bound on the length of an on-interval starting in period $t$,
\item $\beta_t\in[\alpha_t,n-1]$ is an upper bound on the length of an on-interval starting in period
  $t$,
\item $\gamma_t\in[1,n-1]$ is a lower bound on the length of an off-interval starting in period $t$,
\item $\delta_t\in[\gamma_t,n-1]$ is an upper bound on the length of an off-interval starting in period $t$.
\end{itemize}
In particular, we require that there are at least one on-period and at least one off-period
(otherwise there is an on-interval of length $n$ or an off-interval of length $n$, and no matter
where we let this start the upper bound on the length of the corresponding interval will be
violated). We define binary state variables $y_t$ for $t\in[0,n-1]$ to be
\[y_t=
  \begin{cases}
    1 &\text{if period $t$ is an on-period},\\
    0 &\text{if period $t$ is an off-period}.
  \end{cases}
\]
The set of feasible state sequences $(y_0,\dotsc,y_{n-1})\in\{0,1\}^n$ is characterized by the following implications:
\begin{align}
  y_t-y_{t-1}=1 &\implies y_{t+i}=1 \text{ for all }i\in[0,\alpha_t-1] &&t\in[0,n-1],\label{eq:lower_bound_on}\\
  y_t-y_{t-1}=1 &\implies y_{t+i}=0 \text{ for some }i\in[\alpha_t,\beta_t]
                                           &&t\in[0,n-1],\label{eq:upper_bound_on}\\
  y_{t-1}-y_t=1 &\implies y_{t+i}=0 \text{ for all }i\in[0,\gamma_t-1] &&t\in[0,n-1],\label{eq:lower_bound_off}\\
  y_{t-1}-y_t=1 &\implies y_{t+i}=1 \text{ for some }i\in[\gamma_t,\delta_t] &&t\in[0,n-1].\label{eq:upper_bound_off}
\end{align}
We define the binary start-up variables $z_t$ for $t\in[0,n-1]$ to be
\begin{align}
  z_t=1 &\iff y_{t-1}=0\text{ and }y_{t}=1, \label{eq:switch_on}
\end{align}
and define the set
\[\Z=\left\{(\vect y,\vect
    z)\in\{0,1\}^{2n}\,:\,~(\ref{eq:lower_bound_on}),~(\ref{eq:upper_bound_on}),~(\ref{eq:lower_bound_off}),~(\ref{eq:upper_bound_off}),~(\ref{eq:switch_on}),\
    1\leq y_0+\dotsb+y_{n-1}\leq n-1\right\}.\] We are interested in tight linear formulations for
$\Z$, and our approach is to adapt the arguments used in~\cite{QueyranneWolsey2017}. Before studying
the general case we derive a simple feasibility criterion for the constant bound case in which the bounds $\eps_t$ for
$\vect\eps\in\{\vect\alpha,\vect\beta,\vect\gamma,\vect\delta\}$ do not change over time. If
$(\alpha_t,\beta_t,\gamma_t,\delta_t)=(\alpha,\beta,\gamma,\delta)$ for all $t$ then
the number of start-up periods is an integer between $n/(\beta+\delta)$ and $n/(\alpha+\gamma)$. The
following proposition states that for every integer $k$ in this range there exists a feasible
solution with $k$ start-up periods.
\begin{proposition}
  If $(\alpha_t,\,\beta_t,\,\gamma_t,\,\delta_t)=(\alpha,\,\beta,\,\gamma,\,\delta)$ all $t\in[0,n-1]$, then
  \[\left\{z_0+\dots+z_{n-1}\,:\,(\vect y,\vect z)\in \Z\right\}=\left\{
    k\in\mathbb{Z}\,:\,n/(\beta+\delta)\leqslant k\leqslant n/(
    \alpha+\gamma)\right\}.\]
In particular, $\Z\neq\emptyset$ if and only if
$k\left(\alpha+\gamma\right)\leq n\leq k\left(\beta+\delta\right)$ for some integer $k$.
\end{proposition}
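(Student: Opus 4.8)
The plan is to prove the two inclusions by passing to the block decomposition of a cyclic binary string, after which the ``in particular'' statement is just a reformulation. The key structural observation is that a pair $(\vect y,\vect z)\in\{0,1\}^{2n}$ lies in $\Z$ with $z_0+\dots+z_{n-1}=k$ if and only if $k\geq1$ and, reading cyclically, $\vect y$ consists of $k$ maximal blocks of consecutive ones, of lengths $a_1,\dots,a_k$, alternating around the cycle with $k$ maximal blocks of consecutive zeros, of lengths $b_1,\dots,b_k$, all $a_j\in[\alpha,\beta]$ and all $b_j\in[\gamma,\delta]$. To see this, note that the constraint $1\leq y_0+\dots+y_{n-1}\leq n-1$ says precisely that $\vect y$ is neither constant $0$ nor constant $1$, so it has a nonempty collection of maximal on-blocks and maximal off-blocks that alternate around the cycle; their common cardinality is the number of indices $t$ with $y_{t-1}=0$ and $y_t=1$, which by~\eqref{eq:switch_on} equals $z_0+\dots+z_{n-1}$. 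An on-block starting at $t$ has $z_t=1$, so~\eqref{eq:lower_bound_on} forces its length to be at least $\alpha$, and~\eqref{eq:upper_bound_on} forces it to be at most $\beta$ (a longer block would make $y_{t+\alpha},\dots,y_{t+\beta}$ all equal to $1$, contradicting~\eqref{eq:upper_bound_on}); symmetrically~\eqref{eq:lower_bound_off} and~\eqref{eq:upper_bound_off} confine every off-block length to $[\gamma,\delta]$. Conversely, any alternating pattern of $k\geq1$ on-blocks with lengths in $[\alpha,\beta]$ and $k$ off-blocks with lengths in $[\gamma,\delta]$ satisfies~\eqref{eq:lower_bound_on}--\eqref{eq:switch_on}, since for an on-block of length $a\in[\alpha,\beta]$ starting at $t$ the period $t+a$ witnesses~\eqref{eq:upper_bound_on} while~\eqref{eq:lower_bound_on} holds because $a\geq\alpha$, and likewise at every transition; at non-transition periods the implications hold vacuously.

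Given this, the inclusion ``$\subseteq$'' is immediate: summing the block lengths around the cycle gives $a_1+\dots+a_k+b_1+\dots+b_k=n$ with each $a_j\in[\alpha,\beta]$ and each $b_j\in[\gamma,\delta]$, whence $k(\alpha+\gamma)\leq n\leq k(\beta+\delta)$, i.e.\ $n/(\beta+\delta)\leq k\leq n/(\alpha+\gamma)$.

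For ``$\supseteq$'', fix an integer $k$ with $k(\alpha+\gamma)\leq n\leq k(\beta+\delta)$; then $k\geq1$ because $n\geq1$ and $\beta+\delta>0$. I would use the elementary fact that for integer intervals $[\ell_i,u_i]$, $i=1,\dots,m$, the set $\{x_1+\dots+x_m:x_i\in[\ell_i,u_i]\cap\ints\}$ equals $[\ell_1+\dots+\ell_m,u_1+\dots+u_m]\cap\ints$ (start from the lower endpoints and increase the coordinates one at a time). Applied with $k$ copies of $[\alpha,\beta]$ and $k$ copies of $[\gamma,\delta]$, this yields lengths $a_1,\dots,a_k\in[\alpha,\beta]$ and $b_1,\dots,b_k\in[\gamma,\delta]$ summing to $n$. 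Laying these blocks out around the cycle in the alternating order $a_1,b_1,a_2,b_2,\dots,a_k,b_k$ starting at period $0$, and setting $z_t=1$ exactly at the starting periods of the on-blocks, gives by the structural observation a point of $\Z$ with $z_0+\dots+z_{n-1}=k$; the string is nonconstant since $k\geq1$ and $\alpha,\gamma\geq1$, and the blocks tile $[0,n-1]$ since their lengths sum to $n$. Finally, the displayed set on the right is nonempty exactly when some integer $k$ satisfies $n/(\beta+\delta)\leq k\leq n/(\alpha+\gamma)$, equivalently $k(\alpha+\gamma)\leq n\leq k(\beta+\delta)$, which is the ``in particular'' claim.

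I expect the only point needing genuine care to be the structural observation, in particular verifying that, given the lower-bound implications~\eqref{eq:lower_bound_on} and~\eqref{eq:lower_bound_off}, the ``for some $i$'' upper-bound implications~\eqref{eq:upper_bound_on} and~\eqref{eq:upper_bound_off} are equivalent to the block-length upper bounds $\beta$ and $\delta$, and in keeping the modular (wrap-around) indexing straight; the rest is bookkeeping.
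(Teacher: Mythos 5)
Your proposal is correct and follows essentially the same route as the paper: both arguments reduce membership in $\Z$ to an alternating cyclic block decomposition with on-lengths in $[\alpha,\beta]$ and off-lengths in $[\gamma,\delta]$, sum the lengths for the forward inclusion, and for the converse choose integer block lengths summing to $n$ (the paper by an explicit inductive choice of $p_i,q_i$, you by the standard interval-sum fact, which amounts to the same thing) and lay them out around the cycle. Your extra care in verifying the block-length characterization is fine but is exactly what the paper's proof uses implicitly.
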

\begin{proof}
  Let
  $K=\left\{ k\in\mathbb{Z}:n/\left(\beta+\delta\right)\leqslant k\leqslant n/\left(
      \alpha+\gamma\right)\right\}$. We have to show that there exists $(\vect y,\vect z)\in \Z$
  with $z_0+\dotsb+z_{n-1}=k$ if and only if $k\in K$.  First, suppose $(\vect y,\vect z)\in \Z$,
  set $k=z_0+\dotsb+z_{T-1}$ and let $0\leq t_1<t_2<\dots<t_k\leq n-1$ denote the indices with
  $z_{t_i}=1$ for $i\in[1,k]$. Then, for every $i\in[1,k]$, $t_{i+1}=t_i+p_i+q_i$ with
  $\alpha\leq p_i\leq\beta$ and $\gamma\leq q_i\leq\delta$ for all $i\in[1,k]$. Summing over $i$, we
  obtain $n=(p_1+q_1)+\dots+(p_k+q_k)$, hence $k(\alpha+\gamma)\leq n\leq k(\beta+\delta)$, which
  implies $k\in K$. For the converse, start with any $k\in K$. Then
  $k(\alpha+\gamma)\leq n\leq k(\beta+\delta)$, hence $n-(\alpha+\gamma)\geq (k-1)(\alpha+\gamma)$
  and $n-(\beta+\delta)\leq (k-1)(\beta+\delta)$. This implies that we can choose
  $p_k\in[\alpha,\beta]$ and $q_k\in[\gamma,\delta]$ such that
  $(k-1)(\alpha+\gamma)\leq n-(p_k+q_k)\leq(k-1)(\beta+\delta)$.  Continuing this way, we obtain
  $n=(p_k+q_k)+(p_{k-1}+q_{k-1})+\dotsb+(p_1+q_1)$ with $p_i\in[\alpha,\beta]$ and
  $q_i\in[\gamma,\delta]$. Then
 \begin{align*}
    \vect y &=
              \underbrace{11\dots1}_{p_1}\underbrace{00\dots0}_{q_1}\underbrace{11\dots1}_{p_2}\underbrace{00\dots0}_{q_2}\
              \dots\ \underbrace{11\dots1}_{p_k}\underbrace{00\dots0}_{q_k},&   
    \vect z &=
              1\underbrace{00\dots0}_{p_1+q_1-1}1\underbrace{00\dots0}_{p_2+q_2-1}\ \dots\
              1\underbrace{00\dots0}_{p_k+q_k-1}
  \end{align*}
defines a vector $(\vect y,\vect z)\in\Z$ satisfying $z_1+\dots+z_{n-1}=k$.
\end{proof}

\section{An extended network formulation}\label{sec:flow}
We consider a directed graph $(V,A)$ with node set $V=\{0,1\}\times[0,n-1]$, and arc set
\begin{align*}
  A &= \{((0,t),(1,l))\ :\ l\in[t+\alpha_t,\,t+\beta_t]\}\cup\{((1,t),(0,l))\ :\ l\in[t+\gamma_t,\,t+\delta_t]\}.
\end{align*}
Figure~\ref{fig:network} illustrates this graph for $n=6$ and
$(\alpha_t,\beta_t,\gamma_t,\delta_t)=(1,2,1,2)$ for all $t\in[0,n-1]$. 
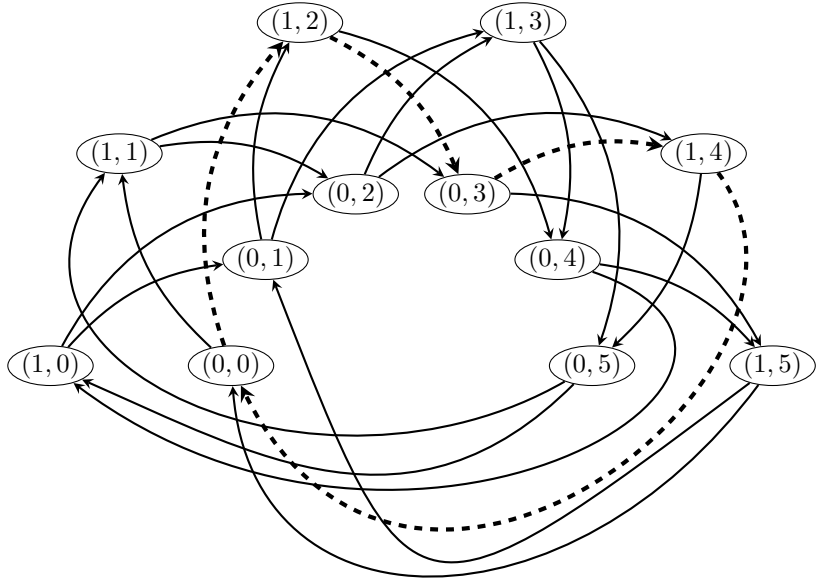
\begin{figure}[htb]
  \centering
  \begin{tikzpicture}[scale=1.2]
    \draw[use as bounding box,color=white](-4.8,-2.5)rectangle(4.8,4.3);
    \node[ellipse,inner sep=.3pt,outer sep=.2pt,draw] (v0) at (180:2) {{\small $(0,0)$}};
    \node[ellipse,inner sep=.3pt,outer sep=.2pt,draw] (v1) at (144:2) {{\small $(0,1)$}};
    \node[ellipse,inner sep=.3pt,outer sep=.2pt,draw] (v2) at (108:2) {{\small $(0,2)$}};
    \node[ellipse,inner sep=.3pt,outer sep=.2pt,draw] (v3) at (72:2) {{\small $(0,3)$}};
    \node[ellipse,inner sep=.3pt,outer sep=.2pt,draw] (v4) at (36:2) {{\small $(0,4)$}};
    \node[ellipse,inner sep=.3pt,outer sep=.2pt,draw] (v5) at (0:2) {{\small $(0,5)$}};
    \node[ellipse,inner sep=.3pt,outer sep=.2pt,draw] (w0) at (180:4) {{\small $(1,0)$}};
    \node[ellipse,inner sep=.3pt,outer sep=.2pt,draw] (w1) at (144:4) {{\small $(1,1)$}};
    \node[ellipse,inner sep=.3pt,outer sep=.2pt,draw] (w2) at (108:4) {{\small $(1,2)$}};
    \node[ellipse,inner sep=.3pt,outer sep=.2pt,draw] (w3) at (72:4) {{\small $(1,3)$}};
    \node[ellipse,inner sep=.3pt,outer sep=.2pt,draw] (w4) at (36:4) {{\small $(1,4)$}};
    \node[ellipse,inner sep=.3pt,outer sep=.2pt,draw] (w5) at (0:4) {{\small $(1,5)$}};
    \draw[thick,bend right=-20,->] (v0) to (w1);
    \draw[ultra thick,dashed,bend right=-30,->] (v0) to (w2);
    \draw[thick,bend right=-20,->] (v1) to (w2);
    \draw[thick,bend right=-30,->] (v1) to (w3);
    \draw[thick,bend right=-20,->] (v2) to (w3);
    \draw[thick,bend right=-30,->] (v2) to (w4);
    \draw[ultra thick,dashed,bend right=-20,->] (v3) to (w4);
    \draw[thick,bend right=-30,->] (v3) to (w5);
    \draw[thick,bend right=-20,->] (v4) to (w5);
    \draw[thick,->] (v4) .. controls (0:5) and (270:3) .. (w0);
    \draw[thick,->] (v5) .. controls (270:2) and (210:2) .. (w0);
    \draw[thick,->] (v5) .. controls (240:2) and (180:5) .. (w1);
    \draw[thick,bend right=-20,->] (w0) to (v1);
    \draw[thick,bend right=-30,->] (w0) to (v2);
    \draw[thick,bend right=-20,->] (w1) to (v2);
    \draw[thick,bend right=-30,->] (w1) to (v3);
    \draw[ultra thick,dashed,bend right=-20,->] (w2) to (v3);
    \draw[thick,bend right=-30,->] (w2) to (v4);
    \draw[thick,bend right=-20,->] (w3) to (v4);
    \draw[thick,bend right=-30,->] (w3) to (v5);
    \draw[thick,bend right=-20,->] (w4) to (v5);
    \draw[ultra thick,dashed,->] (w4) .. controls (0:5) and (270:4) .. (v0);
    \draw[thick,->] (w5) .. controls (300:3.5) and (240:3.5) .. (v0);
    \draw[thick,->] (w5) .. controls (270:3) .. (v1);    
  \end{tikzpicture}
  \caption{The network representation for $n=6$ and
    $(\alpha_t,\beta_t,\gamma_t,\delta_t)=(1,2,1,2)$ for all $t\in[0,n-1]$. The dashed cycle corresponds to $\vect
    y=(1,1,0,1,0,0)$ and $\vect z=(1,0,0,1,0,0)$.}
  \label{fig:network}
\end{figure}
In terms of switching sequences, an arc $((0,t),(1,l))$ corresponds to switching on in period $t$
and switching off in period $l$, and an arc $((1,t),(0,l))$ corresponds to switching off in period
$t$ and switching on in period $l$. Feasible switching sequences correspond to directed cycles of
length $n$ where the length of an arc $((i,t),\,(1-i,t+p))$ for $i\in\{0,1\}$ is $p$.
As in \cite{QueyranneWolsey2017} we can use the flow interpretation to obtain a formulation for
$\Z$ in the following way. For every node $v\in V$, let
$\Ain(v)$ and $\Aout(v)$ denote the sets of arcs entering and leaving $v$, respectively. For
convenience, we will omit one pair of brackets, whenever a node $(i,t)$ appears as an argument, that is, we
will write $\Ain(i,t)$ instead of $\Ain((i,t))$. For $t\in[0,n-1]$, we define
\begin{align*}
  \Aoff(t) &= \left\{\left((1,r),\,(0,l)\right)\in A\,:\,t\in[r,l-1]\}\right\},&
  \Aon(t) &= \left\{\left((0,r),\,(1,l)\right)\in A\,:\,t\in[r,l-1]\}\right\}.
\end{align*}
If $C$ is a cycle of length $n$, then for every $t\in[0,n-1]$, $C$ contains exactly one arc from
$\Aoff(t)\cup \Aon(t)$, and in the correspondence between cycles $C$ and vectors $(\vect y,\vect z)\in\Z$, we have
\begin{align*}
  y_t&=
       \begin{cases}
         0 &\text{if }C\text{ contains an arc }a\in \Aoff(t)\\
         1 &\text{if }C\text{ contains an arc }a\in \Aon(t)         
       \end{cases} &&t\in[0,n-1],\\
  z_t&=1\iff C\text{ contains an arc }a\in \Aout(0,t) &&t\in[0,n-1].
\end{align*}
Let $Q=Q(n,\vect\alpha,\vect\beta,\vect\gamma,\vect\delta)\subseteq\reals^{\lvert A\rvert+2n}$ be the
polytope defined by the constraints
\begin{align}
  \sum_{a\in \Aoff(0)\cup \Aon(0)}x_a &= 1,\label{eq:flow_injection}\\
  \sum_{a\in\Ain(v)}x_{a}-\sum_{a\in\Aout(v)}x_{a} &= 0 && v\in V, \label{eq:flow_conservation}\\
  y_t &= \sum_{a\in \Aon(t)}x_{a} && t\in[0,n-1],\label{eq:phi_1}\\
  z_t &= \sum_{a\in\Aout(0,t)}x_{a} && t\in[0,n-1],\label{eq:phi_2}\\
   x_a &\geq 0 && a\in A.\label{eq:nonnegativity}
\end{align}
\begin{proposition}\label{prop:Q_formulation}
  The polytope $Q$ is an extended formulation for $\Z$, that is, $\Z=\proj_{y,z}(Q)\cap\ints^{2n}$.
\end{proposition}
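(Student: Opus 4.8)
The plan is to establish the two inclusions $\Z\subseteq\proj_{y,z}(Q)\cap\ints^{2n}$ and $\proj_{y,z}(Q)\cap\ints^{2n}\subseteq\Z$ by exploiting the correspondence between feasible switching sequences and directed cycles of length $n$ in $(V,A)$ described just above the statement. The key underlying fact is that a nonnegative flow $\vect x$ satisfying the conservation constraints \eqref{eq:flow_conservation} together with the unit-injection constraint \eqref{eq:flow_injection} across the ``cut'' $\Aoff(0)\cup\Aon(0)$, when restricted to integer values, decomposes into a single directed cycle passing through exactly one arc of $\Aoff(0)\cup\Aon(0)$; since every arc has positive length and the node set has the two-layer structure $\{0,1\}\times[0,n-1]$ with arcs always going from layer $i$ to layer $1-i$, and since $t\mapsto t$ is taken modulo $n$, such a cycle automatically has length exactly $n$ (it cannot wind around more than once because each ``time coordinate'' $t\in[0,n-1]$ is covered by exactly one arc of $\Aoff(t)\cup\Aon(t)$, as already noted).

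For the inclusion $\Z\subseteq\proj_{y,z}(Q)\cap\ints^{2n}$, I would take $(\vect y,\vect z)\in\Z$, read off the on-intervals and off-intervals it determines (using that there is at least one on-period and one off-period, so the switching structure is well defined and cyclic), and build the corresponding cycle $C$ of length $n$: each maximal on-interval $[t,l-1]$ gives the arc $((0,t),(1,l))$, which lies in $A$ precisely because constraints \eqref{eq:lower_bound_on} and \eqref{eq:upper_bound_on} force its length $l-t$ into $[\alpha_t,\beta_t]$, and similarly each off-interval gives an arc in $A$ via \eqref{eq:lower_bound_off}, \eqref{eq:upper_bound_off}. Setting $x_a=1$ for $a\in C$ and $x_a=0$ otherwise gives a $0/1$ vector satisfying \eqref{eq:flow_conservation} and \eqref{eq:flow_injection}; then \eqref{eq:phi_1} and \eqref{eq:phi_2} reproduce exactly the $\vect y$ and $\vect z$ we started with, by the stated correspondence and by \eqref{eq:switch_on}. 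Hence $(\vect y,\vect z)\in\proj_{y,z}(Q)$, and it is integral by assumption.

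For the reverse inclusion, start with $(\vect y,\vect z)\in\proj_{y,z}(Q)\cap\ints^{2n}$, so there is $\vect x$ with $(\vect x,\vect y,\vect z)\in Q$; a priori $\vect x$ need not be integral, but $\vect y$ and $\vect z$ are. I would argue that the values $y_t=\sum_{a\in\Aon(t)}x_a\in\{0,1\}$ and the flow-conservation structure force $\vect x$ to be supported on a single cycle: summing conservation over a suitable set of nodes and using that $\sum_{a\in\Aoff(t)\cup\Aon(t)}x_a=1$ for every $t$ (which follows from \eqref{eq:flow_injection} and \eqref{eq:flow_conservation} by the same telescoping argument as in the non-cyclic case), one sees the support of $\vect x$ must be a union of arc-disjoint cycles whose lengths sum to $n$; integrality of $y_t$ then rules out fractional splitting at the layer-$0$ and layer-$1$ nodes and forces a single integral cycle $C$. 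From $C$ one reconstructs a switching sequence, and checking that it satisfies \eqref{eq:lower_bound_on}–\eqref{eq:switch_on} is immediate from the definition of $A$ (arc lengths lie in the prescribed intervals) and the bound $1\leqslant y_0+\dots+y_{n-1}\leqslant n-1$ holds because $C$ uses at least one arc of each type.

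The main obstacle is the middle step of the reverse inclusion: showing that the (possibly fractional) feasible flow $\vect x$ must in fact be a single integral Hamiltonian-type cycle of length $n$, rather than a convex combination of several such cycles of total length $n$, using \emph{only} the integrality of $\vect y$ (and $\vect z$) rather than of $\vect x$. Here I would lean on the fact that for a fixed $t$ the quantity $y_t$ equals the total flow through the ``$t$ is on'' arcs and is $0$ or $1$; combined with flow conservation this pins down, for each $t$, whether the cycle structure is ``on'' or ``off'' at time $t$, and then a direct argument (or an appeal to the fact that the constraint matrix restricted to the relevant support is a network matrix, hence totally unimodular, so the face of $Q$ with the given integral $(\vect y,\vect z)$ has integral vertices) yields an integral $\vect x$, which then decomposes uniquely into the desired single cycle. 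I expect the cleanest writeup to prove directly that the support of any feasible $\vect x$ with integral $\vect y$ is exactly one cycle by tracking, period by period, which arc carries flow.
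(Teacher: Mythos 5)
Your forward inclusion is the same as the paper's. For the reverse inclusion you take a genuinely different route: the paper never shows that $\vect x$ is integral or supported on a single cycle. Instead it first derives three identities from \eqref{eq:flow_injection} and \eqref{eq:flow_conservation} — namely $\sum_{a\in\Aoff(t)\cup\Aon(t)}x_a=1$ for all $t$, together with $\sum_{a\in\Aout(0,t)}x_a=1$ exactly when $y_{t-1}=0,\ y_t=1$ (and $=0$ otherwise) and the analogous switch-off identity — and then verifies the implications \eqref{eq:lower_bound_on} through \eqref{eq:switch_on} directly from the possibly fractional flow, using only integrality of $\vect y$. You instead aim at the stronger statement that an integral $(\vect y,\vect z)$ forces $\vect x$ to be the incidence vector of the unique compatible cycle of length $n$. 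That statement is true, and your primary plan (period-by-period tracking) does go through: from $\sum_{a\in\Aoff(t)\cup\Aon(t)}x_a=1$ and $y_t\in\{0,1\}$, every support arc covering period $t$ is an on-arc if $y_t=1$ and an off-arc if $y_t=0$; at a switch-on period $t$ this forces the whole unit of on-flow at $t$ to leave $(0,t)$, and flow conservation at the heads of these arcs kills every arc $((0,t),(1,t+i))$ whose length $i$ differs from the length $p$ of the on-run of $\vect y$ starting at $t$, so that $x_{((0,t),(1,t+p))}=1$ and $p\in[\alpha_t,\beta_t]$; iterating around the circle produces the cycle and hence membership in $\Z$. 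So your route buys a stronger conclusion (the fiber of $Q$ over an integral $(\vect y,\vect z)$ is a single integral point), at the cost of a longer combinatorial argument; the paper's route is shorter because it never needs integrality of $\vect x$.

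Two caveats. First, your fallback via total unimodularity is not sound as stated: the $x$-system consists of the network matrix from \eqref{eq:flow_conservation} plus the cut constraint \eqref{eq:flow_injection}, and this combined matrix is not totally unimodular — the paper's Example 1 exhibits a fractional vertex of exactly this flow polytope — so integrality of the fiber cannot be obtained by a blanket TU appeal; it genuinely requires the tracking argument (after which TU is unnecessary). Second, the tracking argument needs a switch period to start from, so you must separately rule out constant $\vect y$: if, say, $y_t=1$ for all $t$, then some on-arc carries positive flow, its head $(1,l)$ then has positive outflow on an arc of $\Aoff(l)$, contradicting $\sum_{a\in\Aoff(l)}x_a=1-y_l=0$; the all-zeros case is symmetric. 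This also yields $1\leq y_0+\dotsb+y_{n-1}\leq n-1$, which is part of the definition of $\Z$ and should be verified explicitly rather than only as a by-product of the cycle.
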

\begin{proof}
  For every $(\vect y,\vect z)\in\Z$ we have a corresponding cycle $C$ of length $n$. Let us define
  $\vect x\in\{0,1\}^{\lvert A\rvert}$ as $x_a=1\iff a\in C$. This provides a point $(\vect x,\vect y,\vect z)\in Q$, and
  shows $\Z\subseteq\proj_{y,z}(Q)\cap\ints^{2n}$. For the converse inclusion we start with an
  arbitrary $(\vect y,\,\vect z)\in\proj_{y,z}(Q)\cap\ints^{2n}$, and fix a vector
  $\vect x\in\reals^{\lvert A\rvert}$ with $(\vect x,\vect y,\vect z)\in Q$. We need to verify that
  $(\vect y,\vect z)$ satisfies~\eqref{eq:lower_bound_on} through~\eqref{eq:switch_on}. For this
  purpose the following observations are useful:
  \begin{align}
    \sum_{a\in \Aoff(t)\cup \Aon(t)}x_a &= 1 &&t\in[0,n-1],\label{eq:constant_flow}\\
        \sum_{a\in\Aout(0,t)}x_{a}&=
    \begin{cases}
      1 &\text{if }y_{t-1}=0\text{ and }y_t=1\\
      0 &\text{otherwise}.
    \end{cases}&&t\in[0,n-1],\label{eq:switch_on_Q}\\
    \sum_{a\in\Aout(1,t)}x_{a}&=
    \begin{cases}
      1 &\text{if }y_{t-1}=1\text{ and }y_t=0\\
      0 &\text{otherwise}.
    \end{cases}&&t\in[0,n-1].\label{eq:switch_off_Q}    
  \end{align}
  These observations can be seen as follows:
  \begin{description}
  \item[\eqref{eq:constant_flow}] Note that
  \begin{align*}
    \left(\Aoff(t)\cup \Aon(t)\right)\setminus\left(\Aoff(t-1)\cup \Aon(t-1)\right) &=
                                                                                \Aout(0,t)\cup\Aout(1,t),\\
    \left(\Aoff(t-1)\cup \Aon(t-1)\right)\setminus\left(\Aoff(t)\cup \Aon(t)\right) &= \Ain(0,t)\cup\Ain(1,t),
  \end{align*}
  and therefore,
  \begin{multline*}
    \sum_{a\in \Aoff(t)\cup \Aon(t)}x_a=\sum_{a\in \Aoff(t-1)\cup
      \Aon(t-1)}x_a+\sum_{a\in\Aout(0,t)\cup\Aout(1,t)}x_a-\sum_{a\in\Ain(0,t)\cup\Ain(1,t)}x_a\\
    \stackrel{\eqref{eq:flow_conservation}}{=}\sum_{a\in \Aoff(t-1)\cup \Aon(t-1)}x_a.
  \end{multline*}
Together with~\eqref{eq:flow_injection} and induction on $t$ this implies~\eqref{eq:constant_flow}.
\item[\eqref{eq:switch_on_Q}] With $\Aout(0,t)=\Aon(t)\setminus
  \Aon(t-1)$ we obtain
  \[\sum_{a\in\Aout(0,t)}x_{a}=\sum_{a\in \Aon(t)\setminus \Aon(t-1)}x_a\leq\sum_{a\in \Aon(t)}x_a\stackrel{\eqref{eq:phi_1}}{=}y_t,\]
  which implies $\sum_{a\in\Aout(0,t)}x_{a}=0$ if $y_t=0$. If $y_t=1$ and $y_{t-1}=0$, then
\[1=y_t-y_{t-1}\stackrel{\eqref{eq:phi_1}}{=}\sum_{a\in \Aon(t)}x_a-\sum_{a\in \Aon(t-1)}x_a\leq \sum_{a\in \Aon(t)\setminus \Aon(t-1)}x_a=\sum_{a\in\Aout(0,t)}x_{a}\]
and consequently, $\sum_{a\in\Aout(0,t)}x_{a}=1$. Finally, for $y_{t-1}=y_t=1$ we note
that $\Ain(0,t)\subseteq \Aoff(t-1)$ and therefore
\[\sum_{a\in\Aout(0,t)}x_a\stackrel{\eqref{eq:flow_conservation}}{=}\sum_{a\in\Ain(0,t)}x_a\leq\sum_{a\in
\Aoff(t-1)}x_a\stackrel{\eqref{eq:constant_flow}}{=}1-\sum_{a\in \Aon(t-1)}x_a\stackrel{\eqref{eq:phi_1}}{=}1-y_{t-1}=0.\]
\item[\eqref{eq:switch_off_Q}] With $\Aout(1,t)=\Aoff(t)\setminus
  \Aoff(t-1)$ we obtain  
  \[\sum_{a\in\Aout(1,t)}x_{a}=\sum_{a\in \Aoff(t)\setminus \Aoff(t-1)}x_a\leq\sum_{a\in \Aoff(t)}x_a\stackrel{\eqref{eq:constant_flow}}{=}1-\sum_{a\in \Aon(t)}x_a\stackrel{\eqref{eq:phi_1}}{=}1-y_t,\]
  which implies $\sum_{a\in\Aout(1,t)}x_{a}=0$ if $y_t=1$. If $y_t=0$ and $y_{t-1}=1$, then
  \begin{multline*}
  1=y_{t-1}-y_{t}\stackrel{\eqref{eq:phi_1}}{=}\sum_{a\in \Aon(t-1)}x_a-\sum_{a\in
    \Aon(t)}x_a\stackrel{\eqref{eq:constant_flow}}{=}\left(1-\sum_{a\in
      \Aoff(t-1)}x_a\right)-\left(1-\sum_{a\in \Aoff(t)}x_a\right)\\
  =\sum_{a\in \Aoff(t)}x_a-\sum_{a\in
      \Aoff(t-1)}x_a\leq \sum_{a\in \Aoff(t)\setminus \Aoff(t-1)}x_a=\sum_{a\in\Aout(1,t)}x_{a}  
  \end{multline*}
and consequently, $\sum_{a\in\Aout(1,t)}x_{a}=1$. Finally, for $y_{t-1}=y_t=0$ we note
that $\Ain(1,t)\subseteq \Aon(t-1)$ and therefore
\[\sum_{a\in\Aout(1,t)}x_a\stackrel{\eqref{eq:flow_conservation}}{=}\sum_{a\in\Ain(1,t)}x_a\leq\sum_{a\in
\Aon(t-1)}x_a\stackrel{\eqref{eq:phi_1}}{=}y_{t-1}=0.\]
\end{description}
After establishing~\eqref{eq:constant_flow},~\eqref{eq:switch_on_Q} and~\eqref{eq:switch_off_Q}, we can
now proceed to verify~\eqref{eq:lower_bound_on} through~\eqref{eq:switch_on}.
\begin{description}
\item[\eqref{eq:lower_bound_on}] Suppose $y_t-y_{t-1}=1$, that is, $y_t=1$ and $y_{t-1}=0$, and fix
  $i\in[0,\alpha_t-1]$. Using $\Aout(0,t)\subseteq
  \Aon(t+i)$, we obtain
\[y_{t+i}\stackrel{\eqref{eq:phi_1}}{=}\sum_{a\in
    \Aon(t+i)}x_a\geq\sum_{a\in\Aout(0,t)}x_a\stackrel{\eqref{eq:switch_on_Q}}{=}1.\]
Now \eqref{eq:constant_flow} implies $y_{t+i}\leq 1$, and we conclude $y_{t+i}=1$, as required.
\item[\eqref{eq:upper_bound_on}] Suppose $y_t-y_{t-1}=1$, that is, $y_t=1$ and
  $y_{t-1}=0$. Then~\eqref{eq:switch_on_Q} implies $\sum_{a\in\Aout(0,t)}x_a=1$. In particular,
  $x_a>0$ for some
  $a\in\Aout(0,t)=\left\{\left((0,t),(1,t+i)\right)\,:\,i\in[\alpha_t,\beta_t]\right\}$. Fix an
  $i\in[\alpha_t,\beta_t]$ such that $x_{a^*}>0$ for $a^*=((0,t),(1,t+i))$, and note that $a^*\in
  \Aoff(t+i)$. Then
  \[y_{t+i}\stackrel{\eqref{eq:phi_1}}{=}\sum_{a\in
      \Aon(t+i)}x_a\stackrel{\eqref{eq:constant_flow}}{=}1-\sum_{a\in \Aoff(t+i)}x_a\leq
    1-x_{a^*}<1,\]
  and by integrality we conclude $y_{t+i}=0$, as required.
\item[\eqref{eq:lower_bound_off}]
  Suppose $y_{t-1}-y_{t}=1$, that is, $y_t=0$ and $y_{t-1}=1$, and fix
  $i\in[0,\gamma_t-1]$. Using $\Aout(1,t)\subseteq
  \Aoff(t+i)$, we obtain
\[y_{t+i}\stackrel{\eqref{eq:phi_1}}{=}\sum_{a\in
    \Aon(t+i)}x_a\stackrel{~\eqref{eq:constant_flow}}{=}1-\sum_{a\in
    \Aoff(t+i)}x_a\leq 1-\sum_{a\in\Aout(1,t)}x_a\stackrel{\eqref{eq:switch_off_Q}}{=}0.\]
Now $y_{t+i}\geq 0$ is a consequence of~\eqref{eq:nonnegativity} and~\eqref{eq:phi_1}, and we conclude $y_{t+i}=0$, as required.
\item[\eqref{eq:upper_bound_off}] Suppose $y_{t-1}-y_{t}=1$, that is, $y_t=0$ and
  $y_{t-1}=1$. Then~\eqref{eq:switch_on_Q} implies $\sum_{a\in\Aout(1,t)}x_a=1$. In particular,
  $x_a>0$ for some
  $a\in\Aout(1,t)=\left\{\left((1,t),(0,t+i)\right)\,:\,i\in[\gamma_t,\delta_t]\right\}$. Fix an
  $i\in[\gamma_t,\delta_t]$ such that $x_{a^*}>0$ for $a^*=((1,t),(0,t+i))$, and note that $a^*\in
  \Aon(t+i)$. Then
  \[y_{t+i}\stackrel{\eqref{eq:phi_1}}{=}\sum_{a\in
      \Aon(t+i)}x_a\geq x_{a^*}>0,\]
  and by integrality we conclude $y_{t+i}=1$, as required.
\item[\eqref{eq:switch_on}] This follows immediately from~\eqref{eq:switch_on_Q} and~\eqref{eq:phi_2}.\qedhere
\end{description}
\end{proof}
In the non-cyclic case, the polytope corresponding to $Q$ is integral and and its projection onto
the $(y,z)$ space gives $\conv(\Z)$ (\cite[Theorem
1]{QueyranneWolsey2017}). Unfortunately, this breaks down in the cyclic case, as the following
example shows.
\begin{example}
  Let $n=6$, $(\alpha_t,\,\beta_t,\,\gamma_t,\,\delta_t)=(1,3,1,3)$, for all $t\in[0,5]$. Then $Q$ is not integral, as
  the cycle $(0,0),(1,3),(0,4),(1,1),(0,2),(1,5),(0,0)$, shown in Figure, with coefficient $1/2$
  corresponds to an extreme point of $Q$. This point projects to $\vect y=(1,1/2,1,1/2,1,1/2)$,
  $\vect z=(1/2,0,1/2,0,1/2,0)$, which is not contained in $\conv(\Z)$.
\begin{figure}[htb]
  \centering
  \begin{tikzpicture}[scale=1.2]
    \draw[use as bounding box,color=white](-4.8,-1)rectangle(4.8,3.6);
    \node[ellipse,inner sep=.3pt,outer sep=.2pt,draw] (v0) at (180:2) {{\small $(0,0)$}};
    \node[ellipse,inner sep=.3pt,outer sep=.2pt,draw] (v1) at (144:1.7) {{\small $(0,1)$}};
    \node[ellipse,inner sep=.3pt,outer sep=.2pt,draw] (v2) at (108:1.6) {{\small $(0,2)$}};
    \node[ellipse,inner sep=.3pt,outer sep=.2pt,draw] (v3) at (72:1.6) {{\small $(0,3)$}};
    \node[ellipse,inner sep=.3pt,outer sep=.2pt,draw] (v4) at (36:1.7) {{\small $(0,4)$}};
    \node[ellipse,inner sep=.3pt,outer sep=.2pt,draw] (v5) at (0:2) {{\small $(0,5)$}};
    \node[ellipse,inner sep=.3pt,outer sep=.2pt,draw] (w0) at (180:4) {{\small $(1,0)$}};
    \node[ellipse,inner sep=.3pt,outer sep=.2pt,draw] (w1) at (144:3.7) {{\small $(1,1)$}};
    \node[ellipse,inner sep=.3pt,outer sep=.2pt,draw] (w2) at (108:3.5) {{\small $(1,2)$}};
    \node[ellipse,inner sep=.3pt,outer sep=.2pt,draw] (w3) at (72:3.5) {{\small $(1,3)$}};
    \node[ellipse,inner sep=.3pt,outer sep=.2pt,draw] (w4) at (36:3.7) {{\small $(1,4)$}};
    \node[ellipse,inner sep=.3pt,outer sep=.2pt,draw] (w5) at (0:4) {{\small $(1,5)$}};
    \draw[thick,bend right=-45,->] (v0) to (w3);
    \draw[thick,bend right=-20,->] (w3) to (v4);
    \draw[thick,->] (v4) .. controls (240:2) and (180:5) .. (w1);
    \draw[thick,bend right=-20,->] (w1) to (v2);
    \draw[thick,bend right=-45,->] (v2) to (w5);
    \draw[thick,bend left=30,->] (w5) to (v0); 
  \end{tikzpicture}
  \caption{A cycle corresponding to a fractional extreme point for
    $(\alpha_t,\beta_t,\gamma_t,\delta_t)=(1,3,1,3)$ for all $t\in[0,5]$.}
  \label{fig:example}
\end{figure}
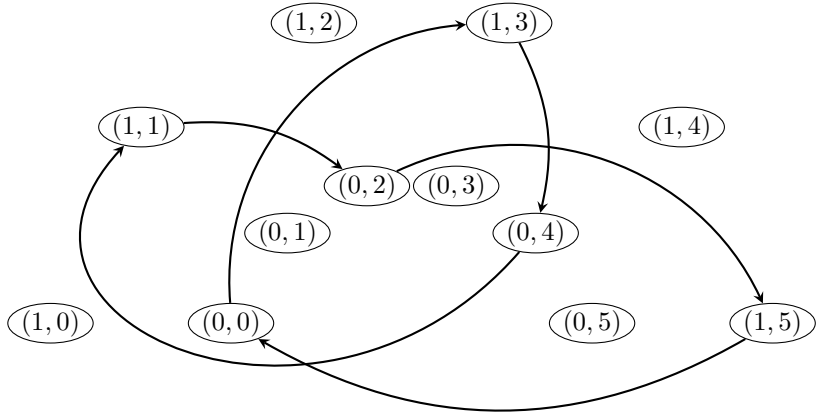
\end{example}
It is still possible to obtain an extended formulation for $\conv(\Z)$ as a flow problem in a
network of size polynomial in $n$. For this purpose we make copies of the original network: one for
every node $(i,\tau)$ such that at least one arc in $\Aout(i,\tau)$ ``wraps around''. In other
words, there is a copy for node $(0,\tau)$ if $\tau+\beta_\tau\geq n$, and there is a copy for node
$(1,\tau)$ if $\tau+\delta_\tau\geq n$. We also add an origin node $O$ and a destination node
$D$. The underlying idea is that $O$-$D$-paths through the copy of the network for node $(0,\tau)$
when $\tau+\beta_\tau\geq n$ represent cycles using an arc of the form $((0,\tau),(1,\tau+p-n))$
with $\max\{\alpha_\tau,n-\tau\}\leq p\leq\beta_\tau$, and $O$-$D$-paths through the copy for node
$(1,\tau)$ when $\tau+\delta_\tau\geq n$ represent cycles using an arc of the form $((1,\tau),(0,\tau+q-n))$ with
$\max\{\gamma_\tau,n-\tau\}\leq q\leq\delta_\tau$. More formally, with
$T_0=\{\tau\in[0,n-1]\,:\,\tau+\beta_\tau\geq n\}$ and
$T_1=\{\tau\in[0,n-1]\,:\,\tau+\delta_\tau\geq n\}$ the extended network has node set
\[ V'=\{O,D\}\cup\left\{(i,t,j,\tau)\,:\,i\in\{0,1\},\,t\in[0,n-1],\,j\in\{0,1\},\,\tau\in T_j\right\},\]
and arc set $A'=A'_1\cup\dots\cup A'_5$, where
\begin{align*}
  A'_1 &= \left\{\left(O,(1,t,0,\tau)\right)\ :\ \tau\in T_0,\ t=\tau+p-n\text{ for some
         }p\in[\alpha_{\tau},\beta_{\tau}]\right\},\\
  A'_2 &= \left\{\left(O,(0,t,1,\tau)\right)\ :\ \tau\in T_1,\ t=\tau+q-n\text{ for some
         }q\in[\gamma_{\tau},\delta_{\tau}]\right\},\\
  A'_3 &= \left\{\left((0,\tau,0,\tau),D\right)\ :\ \tau\in T_0\right\}\cup \left\{\left((1,\tau,1,\tau),D\right)\ :\ \tau\in T_1\right\}, \\
  A'_4 &= \left\{\left((i,t,0,\tau),(1-i,l,0,\tau)\right)\ :\
         \left((i,t),(1-i,l)\right)\in A,\,t<l<n,\ \tau\in T_0\right\},\\
  A'_5 &= \left\{\left((i,t,1,\tau),(1-i,l,1,\tau)\right)\ :\
         \left((i,t),(1-i,l)\right)\in A,\,t<l<n,\ \tau\in T_1\right\}.
\end{align*}
The network for $n=6$, $(\alpha_t,\beta_t,\gamma_t,\delta_t)=(1,2,1,2)$ for all $t\in[0,n-1]$, which implies $T_0=T_1=\{4,5\}$, is shown in Figure~\ref{fig:expanded_network}.
\begin{figure}[htb]
  \centering
  \begin{tikzpicture}
    \node[ellipse,inner sep=1pt,outer sep=.2pt,draw] (O) at (0,0) {{\small $O$}};
    \node[ellipse,inner sep=1pt,outer sep=.2pt,draw] (D) at (14,0) {{\small $D$}};
    \node[ellipse,inner sep=1.2pt,outer sep=.2pt,color=lightgray,draw] (v0u5) at (2,.5) {{\small $0005$}};
    \node[ellipse,inner sep=1.2pt,outer sep=.2pt,draw] (w0u5) at (2,1.5) {{\small $1005$}};
    \node[ellipse,inner sep=1.2pt,outer sep=.2pt,color=lightgray,draw] (v0u4) at (2,2.5) {{\small $0004$}};
    \node[ellipse,inner sep=1.2pt,outer sep=.2pt,draw] (w0u4) at (2,3.5) {{\small $1004$}};
    \node[ellipse,inner sep=1.2pt,outer sep=.2pt,draw] (v0d5) at (2,-1.5) {{\small $0015$}};
    \node[ellipse,inner sep=1.2pt,outer sep=.2pt,color=lightgray,draw] (w0d5) at (2,-.5) {{\small $1015$}};
    \node[ellipse,inner sep=1.2pt,outer sep=.2pt,draw] (v0d4) at (2,-3.5) {{\small $0014$}};
    \node[ellipse,inner sep=1.2pt,outer sep=.2pt,color=lightgray,draw] (w0d4) at (2,-2.5) {{\small $1014$}};
    \node[ellipse,inner sep=1.2pt,outer sep=.2pt,draw] (v1u5) at (4,.5) {{\small $0105$}};
    \node[ellipse,inner sep=1.2pt,outer sep=.2pt,draw] (w1u5) at (4,1.5) {{\small $1105$}};
    \node[ellipse,inner sep=1.2pt,outer sep=.2pt,draw] (v1u4) at (4,2.5) {{\small $0104$}};
    \node[ellipse,inner sep=1.2pt,outer sep=.2pt,color=lightgray,draw] (w1u4) at (4,3.5) {{\small $1104$}};
    \node[ellipse,inner sep=1.2pt,outer sep=.2pt,draw] (v1d5) at (4,-1.5) {{\small $0115$}};
    \node[ellipse,inner sep=1.2pt,outer sep=.2pt,draw] (w1d5) at (4,-.5) {{\small $1115$}};
    \node[ellipse,inner sep=1.2pt,outer sep=.2pt,color=lightgray,draw] (v1d4) at (4,-3.5) {{\small $0114$}};
    \node[ellipse,inner sep=1.2pt,outer sep=.2pt,draw] (w1d4) at (4,-2.5) {{\small $1114$}};
    \node[ellipse,inner sep=1.2pt,outer sep=.2pt,draw] (v2u5) at (6,.5) {{\small $0205$}};
    \node[ellipse,inner sep=1.2pt,outer sep=.2pt,draw] (w2u5) at (6,1.5) {{\small $1205$}};
    \node[ellipse,inner sep=1.2pt,outer sep=.2pt,draw] (v2u4) at (6,2.5) {{\small $0204$}};
    \node[ellipse,inner sep=1.2pt,outer sep=.2pt,draw] (w2u4) at (6,3.5) {{\small $1204$}};
    \node[ellipse,inner sep=1.2pt,outer sep=.2pt,draw] (v2d5) at (6,-1.5) {{\small $0215$}};
    \node[ellipse,inner sep=1.2pt,outer sep=.2pt,draw] (w2d5) at (6,-.5) {{\small $1215$}};
    \node[ellipse,inner sep=1.2pt,outer sep=.2pt,draw] (v2d4) at (6,-3.5) {{\small $0214$}};
    \node[ellipse,inner sep=1.2pt,outer sep=.2pt,draw] (w2d4) at (6,-2.5) {{\small $1214$}};
    \node[ellipse,inner sep=1.2pt,outer sep=.2pt,draw] (v3u5) at (8,.5) {{\small $0305$}};
    \node[ellipse,inner sep=1.2pt,outer sep=.2pt,draw] (w3u5) at (8,1.5) {{\small $1305$}};
    \node[ellipse,inner sep=1.2pt,outer sep=.2pt,color=lightgray,draw] (v3u4) at (8,2.5) {{\small $0304$}};
    \node[ellipse,inner sep=1.2pt,outer sep=.2pt,draw] (w3u4) at (8,3.5) {{\small $1304$}};
    \node[ellipse,inner sep=1.2pt,outer sep=.2pt,draw] (v3d5) at (8,-1.5) {{\small $0315$}};
    \node[ellipse,inner sep=1.2pt,outer sep=.2pt,draw] (w3d5) at (8,-.5) {{\small $1315$}};
    \node[ellipse,inner sep=1.2pt,outer sep=.2pt,draw] (v3d4) at (8,-3.5) {{\small $0314$}};
    \node[ellipse,inner sep=1.2pt,outer sep=.2pt,color=lightgray,draw] (w3d4) at (8,-2.5) {{\small $1314$}};
    \node[ellipse,inner sep=1.2pt,outer sep=.2pt,color=lightgray,draw] (v4u5) at (10,.5) {{\small $0405$}};
    \node[ellipse,inner sep=1.2pt,outer sep=.2pt,draw] (w4u5) at (10,1.5) {{\small $1405$}};
    \node[ellipse,inner sep=1.2pt,outer sep=.2pt,draw] (v4u4) at (10,2.5) {{\small $0404$}};
    \node[ellipse,inner sep=1.2pt,outer sep=.2pt,color=lightgray,draw] (w4u4) at (10,3.5) {{\small $1404$}};
    \node[ellipse,inner sep=1.2pt,outer sep=.2pt,draw] (v4d5) at (10,-1.5) {{\small $0415$}};
    \node[ellipse,inner sep=1.2pt,outer sep=.2pt,color=lightgray,draw] (w4d5) at (10,-.5) {{\small $1415$}};
    \node[ellipse,inner sep=1.2pt,outer sep=.2pt,color=lightgray,draw] (v4d4) at (10,-3.5) {{\small $0414$}};
    \node[ellipse,inner sep=1.2pt,outer sep=.2pt,draw] (w4d4) at (10,-2.5) {{\small $1414$}};
    \node[ellipse,inner sep=1.2pt,outer sep=.2pt,draw] (v5u5) at (12,.5) {{\small $0505$}};
    \node[ellipse,inner sep=1.2pt,outer sep=.2pt,color=lightgray,draw] (w5u5) at (12,1.5) {{\small $1505$}};
    \node[ellipse,inner sep=1.2pt,outer sep=.2pt,color=lightgray,draw] (v5u4) at (12,2.5) {{\small $0504$}};
    \node[ellipse,inner sep=1.2pt,outer sep=.2pt,color=lightgray,draw] (w5u4) at (12,3.5) {{\small $1504$}};
    \node[ellipse,inner sep=1.2pt,outer sep=.2pt,color=lightgray,draw] (v5d5) at (12,-1.5) {{\small $0515$}};
    \node[ellipse,inner sep=1.2pt,outer sep=.2pt,draw] (w5d5) at (12,-.5) {{\small $1515$}};
    \node[ellipse,inner sep=1.2pt,outer sep=.2pt,color=lightgray,draw] (v5d4) at (12,-3.5) {{\small $0514$}};
    \node[ellipse,inner sep=1.2pt,outer sep=.2pt,color=lightgray,draw] (w5d4) at (12,-2.5) {{\small $1514$}};
    \draw[ultra thick,dashed,bend right=15,->] (O) to (v0d4);
    \draw[thick,->] (O) to (v0d5);
    \draw[thick,bend right=10,->] (O) to (v1d5);
    \draw[thick,bend left=15,->] (O) to (w0u4);
    \draw[thick,->] (O) to (w0u5);
    \draw[thick,bend left=10,->] (O) to (w1u5);
    \draw[ultra thick,dashed,bend right=30,->] (w4d4) to (D);
    \draw[thick,->] (w5d5) to (D);
    \draw[thick,bend left=30,->] (v4u4) to (D);
    \draw[thick,->] (v5u5) to (D);
    \draw[thick,->] (v0d4) to (w1d4);
    \draw[ultra thick,dashed,->] (v0d4) to (w2d4);
    \draw[thick,color=lightgray,->] (v1d4) to (w2d4);
    \draw[thick,color=lightgray,->] (v1d4) to (w3d4);
    \draw[thick,color=lightgray,->] (v2d4) to (w3d4);
    \draw[thick,->] (v2d4) to (w4d4);
    \draw[ultra thick,dashed,->] (v3d4) to (w4d4);
    \draw[thick,color=lightgray,->] (v3d4) to (w5d4);
    \draw[thick,color=lightgray,->] (v4d4) to (w5d4);
    \draw[thick,color=lightgray,->] (w0d4) to (v1d4);
    \draw[thick,color=lightgray,->] (w0d4) to (v2d4);
    \draw[thick,->] (w1d4) to (v2d4);
    \draw[thick,->] (w1d4) to (v3d4);
    \draw[ultra thick,dashed,->] (w2d4) to (v3d4);
    \draw[thick,color=lightgray,->] (w2d4) to (v4d4);
    \draw[thick,color=lightgray,->] (w3d4) to (v4d4);
    \draw[thick,color=lightgray,->] (w3d4) to (v5d4);
    \draw[thick,color=lightgray,->] (w4d4) to (v5d4);
    \draw[thick,->] (v0d5) to (w1d5);
    \draw[thick,->] (v0d5) to (w2d5);
    \draw[thick,->] (v1d5) to (w2d5);
    \draw[thick,->] (v1d5) to (w3d5);
    \draw[thick,->] (v2d5) to (w3d5);
    \draw[thick,color=lightgray,->] (v2d5) to (w4d5);
    \draw[thick,color=lightgray,->] (v3d5) to (w4d5);
    \draw[thick,->] (v3d5) to (w5d5);
    \draw[thick,->] (v4d5) to (w5d5);
    \draw[thick,color=lightgray,->] (w0d5) to (v1d5);
    \draw[thick,color=lightgray,->] (w0d5) to (v2d5);
    \draw[thick,->] (w1d5) to (v2d5);
    \draw[thick,->] (w1d5) to (v3d5);
    \draw[thick,->] (w2d5) to (v3d5);
    \draw[thick,->] (w2d5) to (v4d5);
    \draw[thick,->] (w3d5) to (v4d5);
    \draw[thick,color=lightgray,->] (w3d5) to (v5d5);
    \draw[thick,color=lightgray,->] (w4d5) to (v5d5);
    \draw[thick,color=lightgray,->] (v0u4) to (w1u4);
    \draw[thick,color=lightgray,->] (v0u4) to (w2u4);
    \draw[thick,->] (v1u4) to (w2u4);
    \draw[thick,->] (v1u4) to (w3u4);
    \draw[thick,->] (v2u4) to (w3u4);
    \draw[thick,color=lightgray,->] (v2u4) to (w4u4);
    \draw[thick,color=lightgray,->] (v3u4) to (w4u4);
    \draw[thick,color=lightgray,->] (v3u4) to (w5u4);
    \draw[thick,color=lightgray,->] (v4u4) to (w5u4);
    \draw[thick,->] (w0u4) to (v1u4);
    \draw[thick,->] (w0u4) to (v2u4);
    \draw[thick,color=lightgray,->] (w1u4) to (v2u4);
    \draw[thick,color=lightgray,->] (w1u4) to (v3u4);
    \draw[thick,color=lightgray,->] (w2u4) to (v3u4);
    \draw[thick,->] (w2u4) to (v4u4);
    \draw[thick,->] (w3u4) to (v4u4);
    \draw[thick,color=lightgray,->] (w3u4) to (v5u4);
    \draw[thick,color=lightgray,->] (w4u4) to (v5u4);
    \draw[thick,color=lightgray,->] (v0u5) to (w1u5);
    \draw[thick,color=lightgray,->] (v0u5) to (w2u5);
    \draw[thick,->] (v1u5) to (w2u5);
    \draw[thick,->] (v1u5) to (w3u5);
    \draw[thick,->] (v2u5) to (w3u5);
    \draw[thick,->] (v2u5) to (w4u5);
    \draw[thick,->] (v3u5) to (w4u5);
    \draw[thick,color=lightgray,->] (v3u5) to (w5u5);
    \draw[thick,color=lightgray,->] (v4u5) to (w5u5);
    \draw[thick,->] (w0u5) to (v1u5);
    \draw[thick,->] (w0u5) to (v2u5);
    \draw[thick,->] (w1u5) to (v2u5);
    \draw[thick,->] (w1u5) to (v3u5);
    \draw[thick,->] (w2u5) to (v3u5);
    \draw[thick,color=lightgray,->] (w2u5) to (v4u5);
    \draw[thick,color=lightgray,->] (w3u5) to (v4u5);
    \draw[thick,->] (w3u5) to (v5u5);
    \draw[thick,->] (w4u5) to (v5u5); 
  \end{tikzpicture}
  \caption{The expanded network for $n=6$ and $(\alpha_t,\beta_t,\gamma_t,\delta_t)=(1,2,1,2)$ for
    all $t\in[0,5]$, where we have omitted brackets and commas in the node labels. The light parts
    do not lie on any $O$-$D$-path and so can be eliminated from the network in a preprocessing
    step. The dashed path corresponds to the cycle in Figure~\ref{fig:network}.}
  \label{fig:expanded_network}
\end{figure}
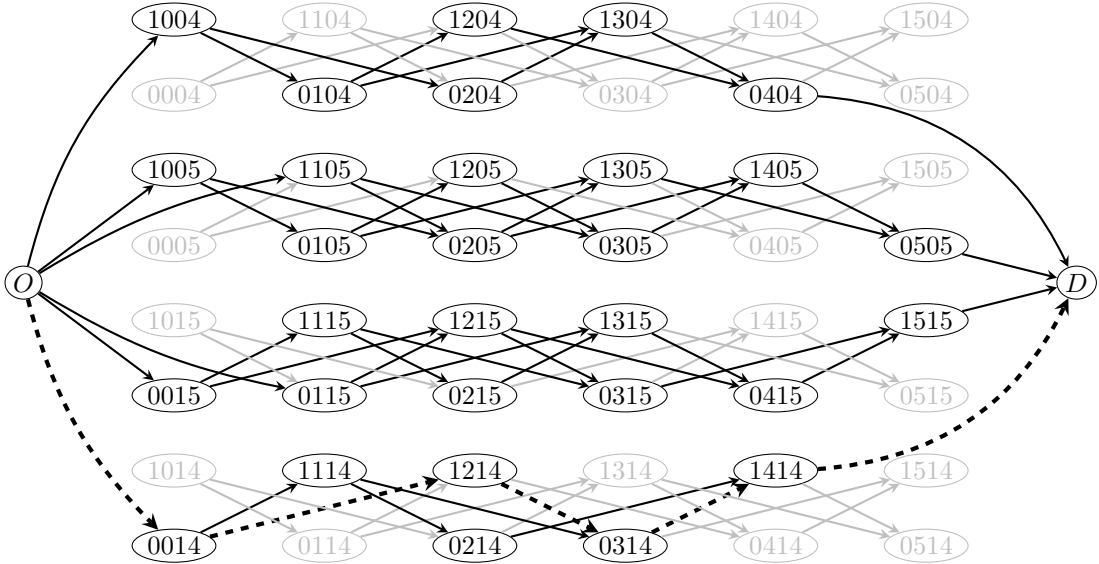
We define $\Aon'(t)$ to be the set of arcs corresponding to $y_t=1$:
\begin{multline*}
\Aon'(t)=\left\{\left(O,(1,l,0,\tau)\right)\in A'_1\,:\,l>t\right\}\cup\left\{\left((0,\tau,0,\tau),D\right)\in
  A'_3\,:\,\tau\leq
  t\right\}\\ \cup\left\{\left((0,k,i,\tau),(1,l,i,\tau)\right)\in
  A'_4\cup A'_5\,:\,k\leq t<l\right\},
\end{multline*}
and then we define the polytope $Q'=Q'(n,\vect\alpha,\vect\beta,\vect\gamma,\vect\delta)\subseteq\reals^{\lvert A'\rvert+2n}$ by the following constraints:
\begin{align}
  \sum_{a\in\Aout(O)}x'_a &= 1,\label{eq:flow_injection_extended}\\
  \sum_{a\in\Ain(v)}x'_{a}-\sum_{a\in\Aout(v)}x'_{a} &= 0 && v\in V'\setminus\{O,D\}, \label{eq:flow_conservation_extended}\\
  y_t &= \sum_{a\in\Aon'(t)}x'_{a} && t\in[0,n-1],\label{eq:y-trafo}\\
  z_t &= \sum_{\tau\in T_1}\sum_{a\in\Aout\left(0,t,0,\tau\right)}x'_{a}+\sum_{\tau\in T_2}\sum_{a\in\Aout\left(0,t,1,\tau\right)}x'_{a} && t\in[0,n-1],\label{eq:z-trafo}\\
   x'_a &\geq 0 && a\in A'.\label{eq:nonnegativity_extended}
\end{align}
\begin{proposition}
The polytope $Q'$ is integral and $\conv(\Z)=\proj_{y,z}(Q')$.
\end{proposition}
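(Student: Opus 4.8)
The plan is to derive the statement from the correspondence, recalled in Section~\ref{sec:flow}, between directed cycles of length $n$ in $(V,A)$ and the points of $\Z$ (with $y_t=1$ iff the cycle uses an arc of $\Aon(t)$, and $z_t=1$ iff it uses an arc of $\Aout(0,t)$). First I would settle integrality. The digraph $(V',A')$ is acyclic: on the interior of any directed path the ``$t$-coordinate'' strictly increases, and the first and last arcs of a path touch $O$ and $D$. Hence \eqref{eq:flow_injection_extended}, \eqref{eq:flow_conservation_extended}, \eqref{eq:nonnegativity_extended} describe the polytope $F$ of unit $O$-$D$ flows in $(V',A')$, which is integral by the usual total-unimodularity argument for network flows, and whose vertices --- the digraph being acyclic --- are exactly the incidence vectors of $O$-$D$ paths. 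Since \eqref{eq:y-trafo}--\eqref{eq:z-trafo} express $\vect y$ and $\vect z$ as integer linear functions of $\vect x'$, the polytope $Q'$ is the graph of a linear map over $F$; in particular it is bounded and its vertices are the integral points $(\vect x',\vect y,\vect z)$ with $\vect x'$ the incidence vector of an $O$-$D$ path.

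Next I would make the bijection between $O$-$D$ paths and length-$n$ cycles explicit. Reading the $t$-coordinates of a length-$n$ cycle cyclically, and using that every arc has length in $[1,n-1]$, shows that the cycle contains \emph{exactly one} arc $((i,\tau),(1-i,l))$ whose length $p$ satisfies $\tau+p\geq n$; then $l=\tau+p-n<\tau$, and $\tau\in T_i$ because $p\leq\beta_\tau$ if $i=0$ and $p\leq\delta_\tau$ if $i=1$. Deleting this ``wrapping'' arc leaves a directed path from $(1-i,l)$ to $(i,\tau)$ with strictly increasing $t$-coordinates in $[0,n-1]$, which lifts to an $O$-$D$ path through the copy indexed by $(i,\tau)$ by prepending the arc from $O$ to $(1-i,l,i,\tau)$ (in $A'_1$ if $i=0$, in $A'_2$ if $i=1$) and appending the arc from $(i,\tau,i,\tau)$ to $D$ (in $A'_3$). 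Conversely an $O$-$D$ path lies in a single copy $(i,\tau)$; its first arc determines an $l$ with $((i,\tau),(1-i,l))\in A$ wrapping around, and replacing the first and last arcs by $((i,\tau),(1-i,l))$ closes the interior into a length-$n$ cycle. These two constructions are mutually inverse.

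The crux --- and the step I expect to require the most care --- is checking that under this bijection the $(\vect y,\vect z)$ obtained from a path via \eqref{eq:y-trafo}--\eqref{eq:z-trafo} equals the $(\vect y,\vect z)$ obtained from the cycle via the correspondence above. The ``non-wrapping'' arcs of the cycle reappear verbatim as arcs of $A'_4\cup A'_5$ in the visited copy, and on them $\Aon'(t)$ and the sets $\Aout(0,t,j,\tau)$ from \eqref{eq:z-trafo} restrict exactly as $\Aon(t)$ and $\Aout(0,t)$ do, so their contributions agree. The subtlety is the wrapping arc. If it is an on-arc $((0,\tau),(1,l))$, the on-interval it creates is $\{\tau,\dots,n-1\}\cup\{0,\dots,l-1\}$, and the two stretches are recorded separately in $\Aon'(t)$ --- by $((0,\tau,0,\tau),D)\in A'_3$ when $\tau\leq t$, and by the $A'_1$-arc into $(1,l,0,\tau)$ when $t<l$ --- and since $l<\tau$ these conditions are mutually exclusive, so the path contributes the correct $0/1$ value to $y_t$, while the start-up at $\tau$ is picked up in \eqref{eq:z-trafo} through the unique out-arc of $(0,\tau,0,\tau)$, namely the arc to $D$. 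If the wrapping arc is an off-arc $((1,\tau),(0,l))$, then neither its first arc $(O,(0,l,1,\tau))\in A'_2$ nor its last arc $((1,\tau,1,\tau),D)\in A'_3$ lies in $\Aon'(t)$, consistent with the arc covering no on-period, while the start-up it creates at $l$ is recorded in \eqref{eq:z-trafo} via the out-arc the path takes at $(0,l,1,\tau)$.

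Finally I would assemble the conclusion. By the previous step the set of $(\vect y,\vect z)$ arising from $O$-$D$ paths coincides with the set arising from length-$n$ cycles, which is $\Z$. Since $Q'$ is a polytope, $\proj_{y,z}(Q')=\conv\{\proj_{y,z}(v):v\text{ a vertex of }Q'\}$, and every vertex of $Q'$ projects into $\Z$, so $\proj_{y,z}(Q')\subseteq\conv(\Z)$. Conversely, for $(\vect y,\vect z)\in\Z$ the associated length-$n$ cycle yields an $O$-$D$ path, hence a point of $Q'$ that projects to $(\vect y,\vect z)$, so $\Z\subseteq\proj_{y,z}(Q')$ and therefore $\conv(\Z)\subseteq\proj_{y,z}(Q')$. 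The two inclusions give $\conv(\Z)=\proj_{y,z}(Q')$; the degenerate case $\Z=\emptyset$ is automatic, since then $(V',A')$ has no $O$-$D$ path and $F=Q'=\emptyset$.
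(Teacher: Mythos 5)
Your proof is correct and follows essentially the same route as the paper: integrality of the flow polytope via total unimodularity (network matrix), preservation of integrality under the integer linear maps \eqref{eq:y-trafo}--\eqref{eq:z-trafo}, and the one-to-one correspondence between elements of $\Z$ and $O$-$D$-paths in $(V',A')$. The only difference is that you spell out in detail the bijection and the agreement of the $(\vect y,\vect z)$ values on the wrapping arc, which the paper merely asserts.
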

\begin{proof}
  The polytope $\proj_x(Q')$, described by~\eqref{eq:flow_injection_extended},
  \eqref{eq:flow_conservation_extended} and~\eqref{eq:nonnegativity_extended} is integral as the
  constraint matrix is a network matrix, hence totally unimodular. Constraints~\eqref{eq:y-trafo}
  and~\eqref{eq:z-trafo} preserve integrality, because they only write the $y$- and $z$- variables as
  linear combinations of the $x$-variables with integer coefficients. In order to see that the
  projection of $Q'$ is the convex hull of $\Z$ it is sufficient to note the one-to-one
  correspondence between elements of $\Z$ and $O$-$D$-paths in the network $(V',A')$.
\end{proof}
\begin{corollary}
  The polytope $Q'$ provides a compact extended formulation for $\conv(\Z)$ with $O(n^3)$ variables
  and $O(n^2)$ constraints. Moreover, if the parameters $\beta_t$ and $\delta_t$ are $O(1)$ for all $t\in[0,n-1]$, then this
  reduces to $O(n)$ variables and constraints.
\end{corollary}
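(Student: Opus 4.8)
The corollary adds only size estimates to the preceding proposition (which already gives correctness, i.e.\ integrality of $Q'$ and $\conv(\Z)=\proj_{y,z}(Q')$), so the plan is simply to count the nodes and arcs of the network $(V',A')$ and the rows and columns of the system~\eqref{eq:flow_injection_extended}--\eqref{eq:nonnegativity_extended}. No real difficulty is expected; the one point worth isolating is that the quantity controlling how many copies of the base network we take, namely $|T_0|+|T_1|$, is $O(n)$ in general but drops to $O(1)$ once the $\beta_t$ and $\delta_t$ are bounded.

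First I would record the size of the base network $(V,A)$: $|V|=2n$, and since node $(0,t)$ has $\beta_t-\alpha_t+1\leq n-1$ out-arcs and node $(1,t)$ has $\delta_t-\gamma_t+1\leq n-1$ out-arcs, $|A|=O(n^2)$. Since $T_0,T_1\subseteq[0,n-1]$ we get $|V'|=2+2n(|T_0|+|T_1|)=O(n^2)$. For the arcs, $|A'_1|\leq\sum_{\tau\in T_0}(\beta_\tau-\alpha_\tau+1)=O(n^2)$ and symmetrically $|A'_2|=O(n^2)$; $|A'_3|=|T_0|+|T_1|=O(n)$; and $|A'_4|\leq|A|\cdot|T_0|$ and $|A'_5|\leq|A|\cdot|T_1|$, each of which is $O(n^2)\cdot O(n)=O(n^3)$. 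Hence $|A'|=O(n^3)$. The variables of $Q'$ are the $|A'|$ flow variables together with the $2n$ variables $\vect y,\vect z$, giving $O(n^3)$ variables; apart from the bounds $x'_a\geq0$, the constraints are the single equation~\eqref{eq:flow_injection_extended}, the $|V'|-2=O(n^2)$ equations~\eqref{eq:flow_conservation_extended}, and the $2n$ equations~\eqref{eq:y-trafo}--\eqref{eq:z-trafo}, for $O(n^2)$ constraints in total.

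For the refinement, suppose $\beta_t\leq B$ and $\delta_t\leq B$ for all $t$ and some constant $B$. Then $\tau\in T_0$ forces $n\leq\tau+\beta_\tau\leq\tau+B$, so $\tau\geq n-B$ and $|T_0|\leq B=O(1)$; likewise $|T_1|=O(1)$. Consequently each node of $(V,A)$ has $O(1)$ out-arcs, so $|A|=O(n)$; moreover $|V'|=2+2n(|T_0|+|T_1|)=O(n)$, $|A'_1|,|A'_2|,|A'_3|=O(1)$, and $|A'_4|\leq|A|\cdot|T_0|=O(n)$, $|A'_5|\leq|A|\cdot|T_1|=O(n)$. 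Hence $|A'|=O(n)$, and $Q'$ has $O(n)$ variables and $O(n)$ constraints.

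The only step that needs a moment's thought is the cubic term: in general $|A'_4|$ and $|A'_5|$ are the product of $|A|=O(n^2)$ with the number of network copies $O(n)$, and the refinement works precisely because bounding $\beta_t,\delta_t$ pulls down both factors simultaneously — $|A|$ to $O(n)$ and the number of copies to $O(1)$.
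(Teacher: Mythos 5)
Your counting argument is correct and is essentially the paper's proof: both bound the size of $(V',A')$ by noting it consists of $|T_0|+|T_1|=O(n)$ copies of the base network (reducing to $O(1)$ copies with $O(1)$ degrees when $\beta_t,\delta_t$ are bounded), and then read off the variable and constraint counts from \eqref{eq:flow_injection_extended}--\eqref{eq:nonnegativity_extended}. Your version just spells out the arc-class bounds $|A'_1|,\dots,|A'_5|$ a little more explicitly than the paper does.
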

\begin{proof}
  The original network $(V,A)$ has $O(n)$ nodes, and since $(V',A')$ is constructed from $O(n)$
  copies of $(V,A)$, it has $O(n^2)$ nodes and all nodes, except possibly $O$ have degree
  $O(n)$. This implies $\lvert A'\rvert=O(n^3)$. If the parameters $\beta_t$ and $\delta_t$ are
  bounded for all $t\in[0,n-1]$ then there are only $O(1)$ copies, and every node has degree $O(1)$.
\end{proof}

\section{Towards a tight formulation in the $(y,z)$-space}\label{sec:yz-formulation}
Following~\cite{QueyranneWolsey2017}, we now assume that every
$\vect\eps\in\{\vect\alpha,\vect\beta,\vect\gamma,\vect\delta\}$ satisfies the \emph{weak
  monotonicity} condition: for every $t\in[0,n-1]$, $\eps_{t+1}\geq\eps_t-1$. This implies that by
waiting one period, one cannot be forced to switch on or off earlier. In particular, weak
monotonicity guarantees the existence of numbers $s(\vect\eps,t)\in[0,n-1]$ for every
$\vect\eps\in\{\vect\alpha,\vect\beta,\vect\gamma,\vect\delta\}$ and $t\in[0,n-1]$ such that
\[\left\{k\in[0,n-1]\,:\,t\in[k,k+\eps_{k}-1]\right\}=\left[s(\vect\eps,t),t\right].\]
For instance, (a) the interval $[s(\vect\alpha,t),t]$ is the set of time periods $k$ for which
$z_k=1$ implies $y_t=1$, and (b) $y_t=1$ implies that $z_k=1$ for some $k\in[s(\vect\beta,t),t]$.
\subsection{A formulation}\label{subsec:formulation}
Following the approach taken in~\cite[Section 3.1]{QueyranneWolsey2017} we define a polytope
$P=P(n,\vect\alpha,\vect\beta,\vect\gamma,\vect\delta)\subseteq\reals^{2n}$ by
\begin{align}
  z_t &\geq y_t-y_{t-1} &&t\in[0,n-1],\label{eq:switch-on}\\
  \sum_{k\in\left[s(\vect\alpha,t),t\right]}z_{k}&\leq y_t
                        &&t\in[0,n-1], \label{eq:on-lower_bound}\\
  y_t&\leq \sum_{k\in\left[s(\vect\beta,t),t\right]}z_{k}
                        &&t\in[0,n-1], \label{eq:on-upper_bound}\\
  \sum_{k\in[s(\vect\gamma,t),t]}z_{k}&\leq 1-y_{s(\vect\gamma,t)-1}
                        &&t\in[0,n-1], \label{eq:off-lower_bound}\\
  1-y_{s(\vect\delta,t)-1}&\leq\sum_{k\in[s(\vect\delta,t),t]}z_{k}
                        &&t\in[0,n-1], \label{eq:off-upper_bound}\\
  0\leq y_t,z_t &\leq 1 &&t\in[0,n-1].\label{eq:variable-bounds}
\end{align}
Analogous to Proposition 2 in~\cite{QueyranneWolsey2017}, we find that this provides a
formulation for $\conv(\Z)$.
\begin{proposition}\label{prop:formulation}
  The polytope $P$ is a formulation for $\Z$, that is, $\Z=P\cap\ints^{2n}$.
\end{proposition}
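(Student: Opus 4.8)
The plan is to establish the two inclusions $\Z\subseteq P\cap\ints^{2n}$ and $P\cap\ints^{2n}\subseteq\Z$ separately, mirroring the argument of Proposition~2 in~\cite{QueyranneWolsey2017} but being careful that the horizon is a cycle. I will use two consequences of the standing assumptions throughout. First, weak monotonicity is precisely what makes each set $\{k:t\in[k,k+\eps_k-1]\}$ equal to an interval $[s(\vect\eps,t),t]$, and since every $\eps_t$ lies in $[1,n-1]$ this interval is a proper sub-arc of the cycle, so the windows in~\eqref{eq:on-lower_bound}--\eqref{eq:off-upper_bound} never wrap around onto themselves. Second, whenever $1\leq y_0+\dots+y_{n-1}\leq n-1$ holds, $\vect y$ has at least one on-period and at least one off-period, so every maximal run of equal values in $\vect y$ is a proper arc and tracing $\vect y$ backward from any period reaches a sign change, i.e.\ the start of an on- or off-interval. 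I will also repeatedly use the triviality that between a period with $y=1$ and a later period with $y=0$ there is a \emph{switch-off} $j$ (meaning $y_{j-1}=1$, $y_j=0$), and symmetrically a \emph{switch-on}.

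For $\Z\subseteq P\cap\ints^{2n}$ I would take $(\vect y,\vect z)\in\Z$ and check the defining inequalities of $P$ one by one. Constraints~\eqref{eq:variable-bounds} hold because $(\vect y,\vect z)\in\{0,1\}^{2n}$, and~\eqref{eq:switch-on} is immediate from~\eqref{eq:switch_on}. The key point for~\eqref{eq:on-lower_bound} and~\eqref{eq:off-lower_bound} is that at most one index in the window carries a start-up: two start-ups $k<k'$ in $[s(\vect\alpha,t),t]$ give $k'-1\in[k,k+\alpha_k-1]$, so~\eqref{eq:lower_bound_on} forces $y_{k'-1}=1$, contradicting $z_{k'}=1$; two start-ups $k<k'$ in $[s(\vect\gamma,t),t]$ sandwich a switch-off $j\in[s(\vect\gamma,t),t]$, so $k'\in[j,j+\gamma_j-1]$ and~\eqref{eq:lower_bound_off} forces $y_{k'}=0$, again a contradiction; the same estimates show that a start-up in the window implies $y_t=1$ (resp.\ $y_{s(\vect\gamma,t)-1}=0$), which gives both inequalities. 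For~\eqref{eq:on-upper_bound} and~\eqref{eq:off-upper_bound}, if $y_t=1$ (resp.\ $y_{s(\vect\delta,t)-1}=0$) I would pick out the start-up at the beginning of the on-interval containing $t$ (resp.\ the start-up one step past the off-interval containing $s(\vect\delta,t)-1$) and use~\eqref{eq:upper_bound_on} (resp.\ \eqref{eq:upper_bound_off}) together with $\eps_t\leq n-1$ to verify that this start-up lies inside the relevant window, so the right-hand side is at least~$1$.

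For $P\cap\ints^{2n}\subseteq\Z$ I would take integral $(\vect y,\vect z)\in P$ and proceed in the following order. The cardinality bound first: if all $y_t$ are $0$ then~\eqref{eq:off-upper_bound} forces some $z_k=1$ while~\eqref{eq:on-lower_bound} at index $k$ gives $z_k\leq y_k=0$; if all $y_t$ are $1$ then~\eqref{eq:on-upper_bound} forces some $z_k=1$ while~\eqref{eq:off-lower_bound} at index $k$ gives $z_k\leq 1-y_{s(\vect\gamma,k)-1}=0$. Next~\eqref{eq:switch_on}: the direction ``$\Leftarrow$'' is~\eqref{eq:switch-on} with~\eqref{eq:variable-bounds}; for ``$\Rightarrow$'', \eqref{eq:on-lower_bound} at index $t$ gives $z_t\leq y_t$ (so $z_t=1$ implies $y_t=1$) and~\eqref{eq:off-lower_bound} at index $t$ gives $z_t\leq 1-y_{s(\vect\gamma,t)-1}$ (so $y_{s(\vect\gamma,t)-1}=0$); if $s(\vect\gamma,t)=t$ this is $y_{t-1}=0$, and otherwise $y_{s(\vect\gamma,t)-1}=0$ together with the assumption $y_{t-1}=1$ yields a start-up $\ell\in[s(\vect\gamma,t),t-1]$, so~\eqref{eq:off-lower_bound} at index $t$ contains the two start-ups $z_\ell$ and $z_t$, contradicting its bound of~$1$. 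Once~\eqref{eq:switch_on} is available, \eqref{eq:lower_bound_on} follows from~\eqref{eq:switch_on} and~\eqref{eq:on-lower_bound} at the indices $t+i$, and~\eqref{eq:lower_bound_off} follows by the same two-start-ups device applied to the least $i$ with $y_{t+i}=1$. Finally, for~\eqref{eq:upper_bound_on} I would assume it fails, use the already proved~\eqref{eq:lower_bound_on} to conclude that $\vect y$ is identically $1$ on $[t,t+\beta_t]$, apply~\eqref{eq:on-upper_bound} at index $t+\beta_t$ to obtain a start-up in $[s(\vect\beta,t+\beta_t),t+\beta_t]$, observe that $t$ is not in that window so (using $\beta_t\leq n-1$) the start-up sits at some $k^*\in[t+1,t+\beta_t]$, and contradict $z_{k^*}=1$ against $y_{k^*-1}=1$; the argument for~\eqref{eq:upper_bound_off} is symmetric, using~\eqref{eq:lower_bound_off}, \eqref{eq:off-upper_bound} and $\delta_t\leq n-1$.

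The step I expect to be the main obstacle is the inclusion $P\cap\ints^{2n}\subseteq\Z$, and within it the verification of~\eqref{eq:switch_on}. In the non-cyclic setting of~\cite{QueyranneWolsey2017} one can anchor an induction at the boundary of the horizon in order to identify the start-up variables; here there is no such anchor, so the equivalence $z_t=1\iff(y_{t-1}=0,\ y_t=1)$ must be teased out locally, and the device I rely on---forcing two start-ups into a single lower-bound window~\eqref{eq:on-lower_bound} or~\eqref{eq:off-lower_bound}---is the crux that I expect to reuse in several of the steps above. The remaining care is purely geometric: one must consistently exploit $\eps_t\leq n-1$ so that each window $[s(\vect\eps,t),t]$ and each maximal run in $\vect y$ is a proper arc that cannot overshoot the period $t$ under consideration, and use the cardinality bound to guarantee that tracing $\vect y$ backward from a period actually meets a sign change.
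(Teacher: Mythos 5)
Your proposal is correct and takes essentially the same route as the paper's proof: both directions are handled by checking each inequality of $P$ against the implications defining $\Z$ and conversely, using the same devices (the window intervals $[s(\vect\eps,t),t]$, the contradiction obtained by forcing two start-ups into one lower-bound window, weak monotonicity, and $\eps_t\leq n-1$), and your order of verification in the integral direction (establish~\eqref{eq:switch_on} first, then the four implications) matches the paper. If anything, you make explicit two small points the paper leaves implicit, namely the verification of $1\leq y_0+\dots+y_{n-1}\leq n-1$ and the second violation mode of~\eqref{eq:on-lower_bound} and~\eqref{eq:off-lower_bound} (two start-ups in one window), but this is a refinement of the same argument rather than a different approach.
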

\begin{proof}
  First, we start with an arbitrary $(\vect y,\vect z)\in\Z$, and verify that it
  satisfies~\eqref{eq:switch-on} through~\eqref{eq:variable-bounds}.
  \begin{description}
  \item[\eqref{eq:switch-on}] This follows immediately from~\eqref{eq:switch_on}.
  \item[\eqref{eq:on-lower_bound}] Suppose the inequality is violated for
    some $t\in[0,n-1]$. Then $y_t=0$ and $z_{k}=1$ for some
    $k\in[s(\vect\alpha,t),t]$. Using~\eqref{eq:switch_on}, this implies $y_{k}=1$ and
    $y_{k-1}=0$, and then by~\eqref{eq:lower_bound_on}, $y_{l}=1$ for all
    $l\in[k,k+\alpha_{k}-1]$, and in particular, $y_t=1$, which is the required
    contradiction. 
  \item[\eqref{eq:on-upper_bound}] Let $t\in[0,n-1]$ be an index with $y_t=1$ and let
    $k\in[0,n-1]$ be the unique index with $y_{k-1}=0$ and $y_{l}=1$ for all $l\in[k,t]$. Then
    $z_{k}=1$ by~\eqref{eq:switch_on}, and using~\eqref{eq:upper_bound_on},
    $t\in[k,k+\beta_{k}-1]$, hence $k\in[s(\vect\beta,t),t]$.
  \item[\eqref{eq:off-lower_bound}] Suppose the inequality is violated for some $t\in[0,n-1]$. Then
    $y_{s(\gamma,t)-1}=1$ and $z_{k}=1$ for some $k\in[s(\vect\gamma,t),t]$. Using~\eqref{eq:switch_on}, this implies $y_{k}=1$ and
    $y_{k-1}=0$. Let $l$ be the first index in the sequence
    $(s(\vect\gamma,t),s(\vect\gamma,t)+1,\dots,k-1)$ with $y_l=0$. By~(\ref{eq:lower_bound_off}),
    $y_{l+i}=0$ for all $i\in[0,\gamma_l-1]$. By weak monotonicity, $k\in[l,l+\gamma_l-1]$, and this
    is the required contradiction.
  \item[\eqref{eq:off-upper_bound}] Let $t\in[0,n-1]$ be a period with $y_{s(\delta,t)-1}=0$, and set
    $l=s(\vect\delta,t)-1$. By~(\ref{eq:upper_bound_off}), $y_{l+i}=1$ for some
    $i\in[1,\delta_l]$. Now~(\ref{eq:switch_on}) implies that $z_{l+i}=1$ for some
    $i\in[1,\delta_l]$. By definition of $s(\vect\delta,t)$,
    $\{l+i\,:\,i\in[1,\delta_l]\}\subseteq\{l+1,l+2,\dots,t\}$, hence $z_{l+1}+z_{l+2}+\dots+z_t\geq
    1$, as required.
  \item[\eqref{eq:variable-bounds}] By definition. 
  \end{description}
  We have shown that $(\vect y,\vect z)\in P$, and therefore $\Z\subseteq P\cap\ints^{2n}$. For the
  reverse inclusion, we start with an arbitrary $(\vect y,\vect z)\in P\cap\ints^{2n}$ and verify
  that it satisfies~\eqref{eq:lower_bound_on} through~\eqref{eq:switch_on}.
  \begin{description}
  \item[\eqref{eq:switch_on}] It follows immediately from~\eqref{eq:switch-on} that if
    $y_t=1$ and $y_{t-1}=0$ then $z_t=1$. For the converse, let $t\in[0,n-1]$ be a period
    with $z_t=1$, and let $k=s(\vect\gamma,t)$. It follows from~(\ref{eq:on-lower_bound}) that
    $y_{t}=1$. From~(\ref{eq:off-lower_bound}), we obtain $y_{k-1}=0$. Let $l$ be the first index in
    $[k,t]$ with $y_l=1$. Then~(\ref{eq:switch-on}) implies $z_l=1$, and if
    $l\neq t$, the left hand side of~(\ref{eq:off-lower_bound}) is at least $2$. We conclude $l=t$,
    hence $y_{t-1}=0$.
  \item[\eqref{eq:lower_bound_on}] We use~\eqref{eq:switch-on} and~\eqref{eq:on-lower_bound} (note
    that $t\in[s(\vect\alpha,t+i),t+i]$ for all $i\in[0,\alpha_t-1]$):
    \[y_t-y_{t-1}=1\stackrel{\eqref{eq:switch-on}}{\implies}
      z_t=1\stackrel{\eqref{eq:on-lower_bound}}{\implies} y_{t+i}=1\text{ for all
      }i\in[0,\alpha_t-1].\]
  \item[\eqref{eq:upper_bound_on}] If $y_t-y_{t-1}=1$, then by~\eqref{eq:lower_bound_on},
    $y_{t+i}=1$ for all $i\in[0,\alpha_t-1]$. For the sake of contradiction, assume $y_{t+i}=1$ for
    $i\in[\alpha_t,\beta_t]$. Then~\eqref{eq:switch_on} implies $z_{t+i}=0$ for all
    $i\in[1,\beta_t]$. By weak monotonicity
    $\left[s(\vect\beta,t+\beta_t),t+\beta_t\right]\subseteq[t+1,t+\beta_t]$, hence
    $y_{t+\beta_t}=0$ by~\eqref{eq:on-upper_bound}, which is the required contradiction.
  \item[\eqref{eq:lower_bound_off}] Suppose there is a pair $(t,i)$ with $t\in[0,n-1]$ and
    $i\in[0,\gamma_t-1]$ with $y_{t-1}=y_{t+i}=1$, $y_t=0$, and let $i$ be the smallest possible
    value. Then~(\ref{eq:switch_on}) implies $z_{t+i}=1$, and~(\ref{eq:off-lower_bound}) implies
    $y_{s(\gamma,t+i)-1}=0$. For $k=s(\vect\gamma,t+i)$, $[k,t+i]\supseteq[t,t+i]$. Let $l$ be the
    first period in $[k,t-1]$ with $y_l=1$. Then $z_l=1$ by~(\ref{eq:switch-on}), and
    \[\sum_{k\in[s(\vect\gamma,t+i),t+i]}z_{k}\geq z_l+z_{t+i}=2, \]
    which contradicts~(\ref{eq:off-lower_bound}).
  \item[\eqref{eq:upper_bound_off}] If $y_{t-1}-y_{t}=1$, then by~\eqref{eq:lower_bound_off},
    $y_{t+i}=0$ for all $i\in[0,\gamma_t-1]$. For the sake of contradiction, assume $y_{t+i}=0$ for
    $i\in[\gamma_t,\delta_t]$. Then~\eqref{eq:switch_on} implies $z_{t+i}=0$ for all
    $i\in[1,\delta_t]$. Set $k=s(\vect\delta,t+\delta_t)$. By weak monotonicity
    $\left[k,t+\delta_t\right]\subseteq[t+1,t+\delta_t]$, and~(\ref{eq:off-upper_bound}) implies
    $y_{k-1}=1$. This is the required contradiction because $k-1\in[t,t+\delta-1]$. \qedhere
  \end{description}   
\end{proof}
\subsection{Valid inequalities}\label{subsec:valid_inequalities}
In contrast to the non-cyclic situation studied in~\cite{QueyranneWolsey2017}, the polytopes $P$ and
$Q$ are not integral in general. In this subsection, let
$(\alpha_t,\beta_t,\gamma_t,\delta_t)=(\alpha,\beta,\gamma,\delta)$ for all
$t\in[0,n-1]$. Then~(\ref{eq:on-lower_bound}),~(\ref{eq:on-upper_bound}),~(\ref{eq:off-lower_bound}) and \eqref{eq:off-upper_bound} can be written as follows:
\begin{align}
 -y_t+ \sum_{i\in[0,\,\alpha-1]}z_{t-i} &\leq 0 &&t\in[0,n-1],\label{eq:lower_bound_on_a}\\
    y_t-\sum_{i\in[0,\,\beta-1]}z_{t-i} &\leq 0 &&t\in[0,n-1],\label{eq:upper_bound_on_a}\\
    y_t+ \sum_{i\in[1,\,\gamma]} z_{t+i}&\leq 1 &&t\in[0,n-1],\label{eq:lower_bound_off_a}\\
   -y_t- \sum_{i\in[1,\delta]} z_{t+i}&\leq -1 &&t\in[0,n-1].\label{eq:upper_bound_off_a}
\end{align}
Let $P_I$ denote the integer hull of $P$, that is $P_I=\conv(P\cap\ints^{2n})$.

\begin{proposition}\label{prop:bounds_on_z}  
  The inequalities
  \begin{align}
    \sum_{t\in[0,n-1]}z_t &\leq \left\lfloor n/(\alpha+\gamma)\right\rfloor,\label{eq:UB_z}\\
    \sum_{t\in[0,n-1]}z_t &\geq \left\lceil n/(\beta+\delta)\right\rceil\label{eq:LB_z}
  \end{align}
  are valid for $P_I$. If $\alpha<\beta$, $\gamma<\delta$ and $\lfloor
  n/(\alpha+\gamma)\rfloor>\lceil n/(\beta+\delta)\rceil$, then the following statements are true:
  \begin{enumerate}[(i)]
\item\label{item:facet_UB_z} If $n$ is not divisible by
  $\alpha+\gamma$, then $\dim P_I= 2n$ and~\eqref{eq:UB_z} is a facet of $P_I$.
\item\label{item:facet_LB_z} If $n$ is not divisible by
  $\beta+\delta$, then $\dim P_I= 2n$ and~\eqref{eq:LB_z} is a facet of $P_I$.
  \end{enumerate}  
\end{proposition}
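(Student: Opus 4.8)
The two valid inequalities follow immediately from the Proposition in Section~\ref{sec:problem} together with Proposition~\ref{prop:formulation}: the latter gives $P_I=\conv(P\cap\ints^{2n})=\conv(\Z)$, while the former states that $z_0+\dots+z_{n-1}$ is an integer lying in $[n/(\beta+\delta),\,n/(\alpha+\gamma)]$ for every $(\vect y,\vect z)\in\Z$, hence in $\bigl[\lceil n/(\beta+\delta)\rceil,\,\lfloor n/(\alpha+\gamma)\rfloor\bigr]$. For the remaining statements put $K=\lfloor n/(\alpha+\gamma)\rfloor$ and $L=\lceil n/(\beta+\delta)\rceil$ and assume $\alpha<\beta$, $\gamma<\delta$ and $K>L$; in particular $K\geq 2$. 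By the Proposition in Section~\ref{sec:problem}, $\Z$ contains an element with exactly $k$ start-ups for every integer $k\in[L,K]$, and since the constraints defining $\Z$ are invariant under cyclic rotation, $\Z$ is too.

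The argument rests on two elementary ``boundary shift'' modifications of a feasible solution, described through its on-intervals (maximal blocks of consecutive ones) and off-intervals (maximal blocks of consecutive zeros). First, if an on-interval of length $<\beta$ is immediately followed by an off-interval of length $>\gamma$, moving the first period of that off-interval into the on-interval produces another element of $\Z$ with the same start-ups that differs from the original in the single coordinate $y_\sigma$, where $\sigma$ was the first period of the off-interval. Second, if an off-interval of length $<\delta$ is immediately followed by an on-interval of length $>\alpha$, moving the first period of that on-interval into the off-interval produces another element of $\Z$ with the same number of start-ups that differs from the original exactly in $y_\tau$ (from $1$ to $0$), $z_\tau$ (from $1$ to $0$) and $z_{\tau+1}$ (from $0$ to $1$), where $\tau$ was the start-up being shifted. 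A short counting argument (choose one on-interval in $[\alpha,\beta-1]$ followed by an off-interval in $[\gamma+1,\delta]$, resp.\ one off-interval in $[\gamma,\delta-1]$ followed by an on-interval in $[\alpha+1,\beta]$, with the remaining $2k-2$ block lengths filling up to $n$, then rotate) shows that $\Z$ contains a $k$-start-up solution admitting the first, resp.\ second, modification at any prescribed position exactly when $k(\alpha+\gamma)<n<k(\beta+\delta)$. For $k=K$ this is equivalent to $\alpha+\gamma\nmid n$ (the inequality $n<K(\beta+\delta)$ being automatic since $K>L\geq n/(\beta+\delta)$), and for $k=L$ it is equivalent to $\beta+\delta\nmid n$ (the inequality $L(\alpha+\gamma)<n$ being automatic since $L<K\leq n/(\alpha+\gamma)$).

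To prove $\dim P_I=2n$, set $k^*=K$ in case~(\ref{item:facet_UB_z}) and $k^*=L$ in case~(\ref{item:facet_LB_z}); by the previous paragraph, for this $k^*$ both modifications are available at every position. Suppose $\sum_t a_ty_t+\sum_t b_tz_t=c$ holds on $P_I=\conv(\Z)$. Applying the first modification at each $\sigma$ to two $k^*$-start-up solutions in $\Z$ differing only in $y_\sigma$ forces $a_\sigma=0$; hence $\vect a=\vect 0$. Then applying the second modification at each $\tau$ forces $-b_\tau+b_{\tau+1}=0$, so all $b_\tau$ equal a common value $b$. Thus $c=b\,(z_0+\dots+z_{n-1})$ on all of $\Z$, and since $\Z$ realizes at least two values of $z_0+\dots+z_{n-1}$ (all of $\{L,\dots,K\}$, which has at least two elements), we conclude $b=c=0$. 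So $P_I$ satisfies no nontrivial affine equation, i.e.\ $\dim P_I=2n$.

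For the facet claim in case~(\ref{item:facet_UB_z}), let $F=\{(\vect y,\vect z)\in P_I:z_0+\dots+z_{n-1}=K\}$; this is a nonempty face by validity of~\eqref{eq:UB_z}, and $F=\conv\{(\vect y,\vect z)\in\Z:z_0+\dots+z_{n-1}=K\}$. Since both modifications preserve the number of start-ups, they can be carried out entirely within the $K$-start-up solutions, so the argument of the previous paragraph (with $k^*=K$) shows that every affine equation valid on $F$ is a scalar multiple of $z_0+\dots+z_{n-1}=K$. Hence $\dim F=2n-1=\dim P_I-1$ and $F$ is a facet defined by~\eqref{eq:UB_z}. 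Case~(\ref{item:facet_LB_z}) is the same with $K$ replaced by $L$ and~\eqref{eq:UB_z} by~\eqref{eq:LB_z}. The one delicate point is the counting argument pinning down the exact range of $k$ for which the two modifications are simultaneously available at all positions, and its matching with the two divisibility hypotheses; the rest is the routine ``every tight equation is forced'' computation.
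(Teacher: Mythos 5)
Your proof is correct and takes essentially the same route as the paper: your two boundary-shift modifications are exactly the paper's pairs of feasible strings differing in a single $y$-coordinate (forcing $a_t=0$) or in two adjacent $z$-coordinates (forcing $b_t=b_{t+1}$), applied at every position via cyclic shifts within the face with the extremal number of start-ups. The only cosmetic differences are that you derive validity from the feasibility proposition of Section~2 rather than by summing the inequalities \eqref{eq:lower_bound_on_a}--\eqref{eq:upper_bound_off_a} and rounding, and that you make explicit (via the counting of admissible block lengths) the existence of the required solutions, which the paper asserts with ``By assumption, there are vectors\dots''.
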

\begin{proof}
  The upper bound~\eqref{eq:UB_z} comes from summing constraints~\eqref{eq:lower_bound_on_a}
  and~(\ref{eq:lower_bound_off_a}) over all $t$, and then using integrality
  to round the RHS. For the lower bound~\eqref{eq:LB_z} we do the same with
  constraints~(\ref{eq:upper_bound_on_a}) and~(\ref{eq:upper_bound_off_a}).
  
  In order to prove~(\ref{item:facet_UB_z}), we write $n=q(\alpha+\gamma)+r$ with
  $r\in[1,\,\alpha+\gamma-1]$, and set
  \[X = \left\{(\vect y,\vect z)\in P_I\,:\,z_0+z_1+\dots+z_{n-1}=q\right\}.\]
  The claim follows if we can show that $\dim X=2n-1$, or equivalently, the affine hull of $X$ is
  $\{(\vect y,\vect z)\,:\,z_0+\dots+z_{n-1}=q\}$. For this purpose, suppose $X$ lies in the affine
  subspace defined by
  \begin{equation}\label{eq:affine_relation}
    \sum_{t\in[0,n-1]}a_ty_t+\sum_{t\in[0,n-1]}b_tz_t=c.
  \end{equation}
  By assumption, there are vectors $(\vect y,\vect z),(\vect y',\vect z)\in X$, where
  \begin{align*}
    \vect y &=
              0\underbrace{00\dots0}_{d_1}\underbrace{11\dots1}_{d'_1}\underbrace{00\dots0}_{d_2}\underbrace{11\dots1}_{d'_2}\
              \dots\ \underbrace{00\dots0}_{d_q}\underbrace{11\dots1}_{d'_q},\\
    \vect y' &=
              1\underbrace{00\dots0}_{d_1}\underbrace{11\dots1}_{d'_1}\underbrace{00\dots0}_{d_2}\underbrace{11\dots1}_{d'_2}\
              \dots\ \underbrace{00\dots0}_{d_q}\underbrace{11\dots1}_{d'_q},\\
    \vect z &=
              \underbrace{00\dots0}_{d_1+1}1\underbrace{00\dots0}_{d'_1+d_2-1}1\ \dots\ \underbrace{00\dots0}_{d'_{q-1}+d_q-1}1\underbrace{00\dots0}_{d'_q-1}.
  \end{align*}
  Taking the difference between the two equations obtained from substituting $(\vect y,\vect z)$ and
  $(\vect y',\vect z)$ into~\eqref{eq:affine_relation}, we conclude $a_0=0$, and applying the same
  argument to the cyclic shifts of $(\vect y,\vect z)$ and $(\vect y',\vect z)$, $a_t=0$ for all
  $t\in[0,n-1]$. Applying a similar argument to vectors $(\vect y,\vect z),(\vect y',\vect z')\in X$
  with
  \begin{align*}
    \vect y &=
              0\underbrace{11\dots1}_{d_1}\underbrace{00\dots0}_{d'_1}\underbrace{11\dots1}_{d_2}\underbrace{00\dots0}_{d'_2}\
              \dots\ \underbrace{11\dots1}_{d_q}\underbrace{00\dots0}_{d'_q},\\
    \vect y' &=
              1\underbrace{11\dots1}_{d_1}\underbrace{00\dots0}_{d'_1}\underbrace{11\dots1}_{d_2}\underbrace{00\dots0}_{d'_2}\
              \dots\ \underbrace{11\dots1}_{d_q}\underbrace{00\dots0}_{d'_q},\\
    \vect z &=
              01\underbrace{00\dots0}_{d_1+d'_1-1}1\underbrace{00\dots0}_{d_2+d'_2-1}\ \dots\
              1\underbrace{00\dots0}_{d_q+d'_q-1},\\
    \vect z' &=
              10\underbrace{00\dots0}_{d_1+d'_1-1}1\underbrace{00\dots0}_{d_2+d'_2-1}\ \dots\ 1\underbrace{00\dots0}_{d_q+d'_q-1},
  \end{align*}
  and their cyclic shifts, we obtain $b_{t+1}-b_t=a_t=0$ for all $t\in[0,n-1]$. As a consequence,
  \eqref{eq:affine_relation} is a multiple of the relation $z_0+\dots+z_{n-1}=q$, and this
  concludes the proof of~(\ref{item:facet_UB_z}). The proof of~(\ref{item:facet_LB_z}) is similar.
 \end{proof}
\begin{example}
  Let $(\alpha,\beta,\gamma,\delta)=(1,2,1,2)$. For $n\in\{4,5\}$, $P_I$ is completely described
  by constraints~\eqref{eq:switch-on} through~\eqref{eq:variable-bounds}, together
  with~\eqref{eq:UB_z} and~\eqref{eq:LB_z}. Constraint~(\ref{eq:LB_z}) is a facet of $P_I$ unless
  $n$ is a multiple of $4$, and~(\ref{eq:UB_z}) is a facet whenever $n$ is odd.
\end{example}

\begin{proposition}\label{prop:bound_y}
  Let $n=q_1(\alpha+\delta)+r_1=q_2(\beta+\gamma)-r_2$ with $r_1\in[0,\,\alpha+\delta-1]$ and
  $r_2\in[0,\,\beta+\gamma-1]$. The inequalities
  \begin{align}
    \sum_{t\in[0,n-1]}y_t &\geq q_1\alpha+\min\{r_1,\alpha\}\label{eq:LB_y}\\
    \sum_{t\in[0,n-1]}y_t &\leq q_2\beta-\min\{r_2,\beta\}\label{eq:UB_y}
  \end{align}
 are valid for $P_I$.
\end{proposition}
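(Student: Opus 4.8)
The plan is to invoke Proposition~\ref{prop:formulation}, which gives $P_I=\conv(P\cap\ints^{2n})=\conv(\Z)$, so that it suffices to check~\eqref{eq:LB_y} and~\eqref{eq:UB_y} for every $(\vect y,\vect z)\in\Z$. If $\Z=\emptyset$ there is nothing to prove, so fix $(\vect y,\vect z)\in\Z$. Since $1\leq y_0+\dots+y_{n-1}\leq n-1$, the cyclic string $\vect y$ splits into $k\geq1$ maximal on-intervals, of lengths $p_1,\dots,p_k$, alternating with $k$ maximal off-intervals, of lengths $o_1,\dots,o_k$; for constant bounds the implications~\eqref{eq:lower_bound_on}--\eqref{eq:upper_bound_off} amount exactly to $p_i\in[\alpha,\beta]$ and $o_i\in[\gamma,\delta]$ for all $i$, and $\sum_{i=1}^k(p_i+o_i)=n$. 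The two quantities appearing in~\eqref{eq:LB_y} and~\eqref{eq:UB_y} are $\sum_{t\in[0,n-1]}y_t=\sum_{i=1}^kp_i$ and $\sum_{t\in[0,n-1]}z_t=k$, so from now on $k$ is the only free parameter.

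For~\eqref{eq:LB_y} I would bound $\sum_ip_i$ below in two ways: $p_i\geq\alpha$ gives $\sum_ip_i\geq k\alpha$, and $o_i\leq\delta$ together with $\sum_ip_i=n-\sum_io_i$ gives $\sum_ip_i\geq n-k\delta$, hence $\sum_ip_i\geq\max\{k\alpha,\,n-k\delta\}$. Substituting $n=q_1(\alpha+\delta)+r_1$ and setting $m=k-q_1\in\ints$ gives $k\alpha=q_1\alpha+m\alpha$ and $n-k\delta=q_1\alpha+r_1-m\delta$, so $\max\{k\alpha,\,n-k\delta\}=q_1\alpha+\max\{m\alpha,\,r_1-m\delta\}$. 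The proof is then finished by a one-line case split: if $m\leq0$ then $r_1-m\delta\geq r_1\geq\min\{r_1,\alpha\}$, and if $m\geq1$ then $m\alpha\geq\alpha\geq\min\{r_1,\alpha\}$, so $\max\{m\alpha,\,r_1-m\delta\}\geq\min\{r_1,\alpha\}$ for every integer $m$, which is exactly~\eqref{eq:LB_y}.

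The argument for~\eqref{eq:UB_y} is the mirror image: $p_i\leq\beta$ gives $\sum_ip_i\leq k\beta$, and $o_i\geq\gamma$ gives $\sum_ip_i\leq n-k\gamma$, so $\sum_ip_i\leq\min\{k\beta,\,n-k\gamma\}$; substituting $n=q_2(\beta+\gamma)-r_2$ and $m=k-q_2$ rewrites the right-hand side as $q_2\beta+\min\{m\beta,\,-r_2-m\gamma\}$, and the dual case split (if $m\leq-1$ then $m\beta\leq-\beta$; if $m\geq0$ then $-r_2-m\gamma\leq-r_2$) shows $\min\{m\beta,\,-r_2-m\gamma\}\leq-\min\{r_2,\beta\}$, which gives~\eqref{eq:UB_y}.

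I do not foresee a genuine obstacle: once Proposition~\ref{prop:formulation} reduces the claim to feasible sequences and the on/off-interval decomposition is in hand, everything is elementary arithmetic. The only thing to watch is the remainder bookkeeping behind $q_1,r_1$ and $q_2,r_2$ — in effect, that $\max\{k\alpha,\,n-k\delta\}$, viewed over all integers $k$, is minimized for $k\in\{q_1,\,q_1+1\}$ with value $q_1\alpha+\min\{r_1,\alpha\}$, and dually $\min\{k\beta,\,n-k\gamma\}$ is maximized for $k\in\{q_2-1,\,q_2\}$ with value $q_2\beta-\min\{r_2,\beta\}$ — which is precisely what the two case splits above verify.
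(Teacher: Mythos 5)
Your proof is correct and in essence the same as the paper's: both reduce the claim to the aggregate bounds $\max\{\alpha k,\,n-\delta k\}\leq\sum_{t}y_t\leq\min\{\beta k,\,n-\gamma k\}$ with $k=\sum_{t}z_t$ an integer, and then conclude with the same case split of $k$ around $q_1$ (resp.\ $q_2$), which is exactly the paper's dichotomy $k\leq q_1$ versus $k\geq q_1+1$ and $k\geq q_2$ versus $k\leq q_2-1$. The only cosmetic difference is how the aggregate bounds are obtained --- the paper sums the inequalities \eqref{eq:lower_bound_on_a}--\eqref{eq:upper_bound_off_a} over all $t$, while you read them off the on/off-interval decomposition of a feasible cyclic string --- which changes nothing of substance.
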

\begin{proof}
Taking the sums of~\eqref{eq:lower_bound_on_a} through~\eqref{eq:upper_bound_off_a} over all $t\in[0,n-1]$, we obtain
\begin{align*}
  \alpha\sum_{t\in[0,n-1]}z_t\stackrel{\eqref{eq:lower_bound_on_a}}{\leq}\sum_{t\in[0,n-1]}y_t &\stackrel{\eqref{eq:upper_bound_on_a}}{\leq}\beta\sum_{t\in[0,n-1]}z_t, \\
  n-\delta\sum_{t\in[0,n-1]}z_t\stackrel{\eqref{eq:upper_bound_off_a}}{\leq}\sum_{t\in[0,n-1]}y_t &\stackrel{\eqref{eq:lower_bound_off_a}}{\leq} n-\gamma\sum_{t\in[0,n-1]}z_t.
\end{align*}
As a consequence,
\begin{align*}
  \sum_{t\in[0,n-1]}y_t &\geq
  \begin{cases}
    q_1\alpha+\alpha &\text{if }z_0+\dots+z_{n-1}\geq q_1+1,\\
    q_1\alpha+r_1&\text{if }z_0+\dots+z_{n-1}\leq q_1,
  \end{cases}\\
  \sum_{t\in[0,n-1]}y_t &\leq
  \begin{cases}
    q_2\beta-\beta &\text{if }z_0+\dots+z_{n-1}\leq q_2-1,\\
    q_2\beta-r_2&\text{if }z_0+\dots+z_{n-1}\geq q_2.
  \end{cases}\qedhere
\end{align*}
\end{proof}
\begin{example}
  For $n=7$ and $(\alpha,\beta,\gamma,\delta)=(1,2,1,2)$, (\ref{eq:LB_y}) and~(\ref{eq:UB_y}) become
  $\sum y_t\geq 3$ and $\sum y_t\leq 4$, respectively, and both of them are facets of $P_I$, as can
  be verified by hand or using software such as \texttt{polymake}\cite{Assarf2016}. 
\end{example}
\begin{example}
  For $(\alpha,\beta,\gamma,\delta)=(1,2,1,2)$ it can be checked that the following are valid
  inequalities for $P_I$:
    \begin{align}
  y_t+y_{t+1}-z_t+\sum_{i\in[2,n-2]}z_{t+i} &\leq \left\lfloor(n-1)/2\right\rfloor && t\in[0,n-1],  \label{eq:VI_1}\\
  y_t-y_{t-2}-z_t-z_{t-1}+\sum_{i\in[1,n-2]}z_{t+i}&\leq \left\lfloor(n-3)/2\right\rfloor &&
                                                                                                 t\in[0,n-1],\label{eq:VI_2}\\
  -y_t-y_{t+1}-z_{t+2}+\sum_{i\in[0,n-4]}z_{t-i} &\leq \left\lfloor(n-5)/2\right\rfloor && t\in[0,n-1].\label{eq:VI_3}  
    \end{align}
    For $n=6$, these are facets, and $P_I$ is completely described by~\eqref{eq:switch-on}
    through~\eqref{eq:variable-bounds}, \eqref{eq:LB_z},~\eqref{eq:VI_1},~\eqref{eq:VI_2},~\eqref{eq:VI_3}.
\end{example}

\subsection{An extended formulation}\label{subsec:extended_yz}
In Section~\ref{sec:flow} we defined the sets $T_0=\{\tau\in[0,n-1]\,:\,\tau+\beta_\tau\geq n\}$ and
$T_1=\{\tau\in[0,n-1]\,:\,\tau+\delta_\tau\geq n\}$ in order to classify the feasible solutions
according to the last switching period in the time horizon. More precisely, we have a partition
\[\Z=\bigcup_{i\in\{0,1\}}\bigcup_{\tau\in
    T_i}Z^{(i,\tau)}(n,\vect\alpha,\,\vect\beta,\vect\gamma,\,\vect \delta),\]
with
\begin{align*}
  Z^{(0,\tau)}(n,\vect\alpha,\,\vect\beta,\vect\gamma,\,\vect \delta) &= \Z\cap\left\{(\vect
                                                                        y,\,\vect
                                                                        z)\,:\,y_{\tau-1}=0,\,y_\tau=y_{\tau+1}=\dots=y_{n-1}=1\right\}\\
  Z^{(1,\tau)}(n,\vect\alpha,\,\vect\beta,\vect\gamma,\,\vect \delta) &= \Z\cap\left\{(\vect
                                                                        y,\,\vect
                                                                        z)\,:\,y_{\tau-1}=1,\,y_\tau=y_{\tau+1}=\dots=y_{n-1}=0\right\}
\end{align*}
We will describe the convex hulls of the sets $Z^{(i,\tau)}(n,\vect\alpha,\,\vect\beta,\vect\gamma,\,\vect \delta)$
following~\cite{QueyranneWolsey2017}, and then a result from disjunctive programming implies an extended
formulation for $\Z$. For the rest of this subsection we fix
$(n,\vect\alpha,\,\vect\beta,\vect\gamma,\,\vect \delta)$ and omit them from the notation, writing
for instance $Z$ instead of $\Z$. In order to describe the convex hulls of the sets
$Z^{(i,\tau)}$, we need additional parameters. The underlying idea is that the elements of a set
$Z^{(i,\tau)}$ correspond to on-off-sequences in the non-cyclic setting with known initial state as
described in \cite[Section 2]{QueyranneWolsey2017}. In order to capture the initial state, which is
determined by the pair $(i,\tau)$, we introduce an additional time period $-1$, which is essentially
a copy of period $n-1$. For instance, an element of $Z^{(0,n-3)}$ with $\alpha_{n-3}=5$ and
$\beta_{n-3}=8$, corresponds to a non-cyclic sequence with an on-switch in period $-1$, starting
with at least $3$ and at most $6$ on-periods. This is enforced by setting $\alpha_{-1}^{(0,n-3)}=3$
and $\beta^{(0,n-3)}=6$. In general, we introduce the following parameters:
\begin{align*}
  \alpha_{-1}^{(0,\tau)} &= \max\{1,\tau+\alpha_\tau-(n-1)\} &&\tau\in T_0,\\
  \beta_{-1}^{(0,\tau)} &= \tau+\beta_\tau-(n-1) &&\tau\in T_0,\\
  \left(\gamma_{-1}^{(0,\tau)},\,\delta_{-1}^{(0,\tau)}\right) &=
                                                                 \left(\gamma_{n-1},\,\delta_{n-1}\right)&&\tau\in
                                                                                                            T_0,\\
  \left(\alpha_{-1}^{(1,\tau)},\,\beta_{-1}^{(1,\tau)}\right) &=
                                                                 \left(\alpha_{n-1},\,\beta_{n-1}\right)&&\tau\in
                                                                                                           T_1,\\
  \gamma_{-1}^{(1,\tau)} &= \max\{1,\tau+\gamma_\tau-(n-1)\} &&\tau\in T_1,\\
  \delta_{-1}^{(1,\tau)} &= \tau+\delta_\tau-(n-1) &&\tau\in T_1.  
\end{align*}

For $t\in[0,n-1]$, we set $\eps_t^{(i,\tau)}=\eps_t$ for all $i\in\{0,1\}$, $\tau\in T_i$. Every
vector
$\vect\eps\in\{\vect\alpha^{(i,\tau)},\vect\beta^{(i,\tau)},\vect\gamma^{(i,\tau)},\vect\delta^{(i,\tau)}\,:\,i\in\{0,1\},\,
\tau\in T_i\}$ satisfies weak monotonicity, that is, $\eps_{t+1}\geq\eps_t-1$ for all $t\in[-1,n-2]$,
and therefore we can apply the results of~\cite[Section 3.1]{QueyranneWolsey2017}. We set
\[s'(\vect\eps,t)=\min\left\{k\in[-1,t]\,:\,k+\eps_k\geq t+1\right\}\]
and define polytopes $\tilde P^{(i,\tau)}\subseteq\reals^{2n+3}$ for $i\in\{0,1\}$, $\tau\in T_i$ by the
following constraints:
\begin{align*}
  z_{-1} &=y_{-1},\\
  z_t &\geq y_t-y_{t-1} &&t\in[0,n-1], \\
  \sum_{k\in[s'(\vect\alpha^{(i,\tau)},t),t]}z_k &\leq y_t &&t\in[0,n-1], \\
  y_t&\leq\sum_{k\in[s'(\vect\beta^{(i,\tau)},t),t]}z_k   &&t\in[0,n-1], \\
  \sum_{k\in[s'(\vect\gamma^{(i,\tau)},t),t]}z_k &\leq \lambda-y_{s'(\vect\gamma^{(i,\tau)},t)-1}
      &&t\in[0,n-1]:\,s'(\vect\gamma^{(i,\tau)},t)\geq 0, \\
  \lambda-y_{s'(\vect\delta^{(i,\tau)},t)-1} &\leq \sum_{k\in[s'(\vect\delta^{(i,\tau)},t),t]}z_k 
      &&t\in[0,n-1]:\,s'(\vect\delta^{(i,\tau)},t)\geq 0, \\
  0\leq y_t,\,z_t\leq \lambda&\leq 1&&t\in[0,n-1].
\end{align*}
The polytopes $\hat P^{(i,\tau)}\subseteq\reals^{2n+3}$ are defined as follows:
\begin{align*}
  \hat P^{(0,\tau)} &= \tilde P^{(0,\tau)}\cap\{(\vect y,\,\vect z,\,\lambda)\,:\,y_{\tau-1}=0,\,y_\tau=\dots=y_{n-1}=y_{-1}=\lambda\},\\
  \hat P^{(1,\tau)} &= \tilde P^{(1,\tau)}\cap\{(\vect y,\,\vect z,\,\lambda)\,:\,y_{\tau-1}=\lambda,\,y_\tau=\dots=y_{n-1}=y_{-1}=0\}.
\end{align*}
For $\lambda^*\in\reals$, let $\tilde P^{(i,\tau)}(\lambda^*)$ and $\hat P^{(i,\tau)}(\lambda^*)$ be the
slices of $\tilde P^{(i,\tau)}$ and $\hat P^{(i,\tau)}$, respectively, obtained by fixing $\lambda=\lambda^*$.
\begin{lemma}\label{lem:partition_of_Z}
  For every $i\in\{0,1\}$ and $\tau\in T_i$,
  $\conv(Z^{(i,\tau)})=f(\hat P^{(i,\tau)}(1))$, where $f:\reals^{2n+3}\to\reals^{2n}$ is the
  projection $(y_{-1},\,y_0,\dots,\,y_{n-1},\,z_{-1},\,z_0,\dots,\,z_{n-1},\,\lambda)\mapsto(y_{0},\,y_1,\dots,\,y_{n-1},\,z_{0},\,z_1,\dots,\,z_{n-1})$.
\end{lemma}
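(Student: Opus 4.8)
The plan is to reduce the claim to the non-cyclic results of~\cite{QueyranneWolsey2017} by exhibiting $\hat P^{(i,\tau)}(1)$ as a face of a Queyranne--Wolsey polytope over the extended horizon $\{-1,0,\dots,n-1\}$, and then projecting out the auxiliary coordinates $y_{-1},z_{-1},\lambda$. First I would observe that fixing $\lambda=1$ turns the system defining $\tilde P^{(i,\tau)}$ into exactly the system~\eqref{eq:switch-on}--\eqref{eq:variable-bounds} (together with $z_{-1}=y_{-1}$) for the \emph{non-cyclic} problem on periods $-1,0,\dots,n-1$ with bound vectors $\vect\alpha^{(i,\tau)},\vect\beta^{(i,\tau)},\vect\gamma^{(i,\tau)},\vect\delta^{(i,\tau)}$, with $s'(\vect\eps,\cdot)$ in the role of $s(\vect\eps,\cdot)$ and $z_{-1}=y_{-1}$ encoding a ``fresh'' start of the horizon. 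Since these bound vectors all satisfy weak monotonicity, the results of~\cite[Section~3.1]{QueyranneWolsey2017} apply: $\tilde P^{(i,\tau)}(1)$ is integral and its integer points are precisely the feasible non-cyclic state/start-up sequences for these data. In particular $\tilde P^{(i,\tau)}(1)$ is a $0/1$-polytope.

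Next I would note that $\hat P^{(i,\tau)}(1)$ arises from $\tilde P^{(i,\tau)}(1)$ by setting $y_{\tau-1}$ and the block $y_{-1},y_\tau,y_{\tau+1},\dots,y_{n-1}$ to $0$ or $1$ (namely $y_{\tau-1}=0$ and the rest equal to $1$ when $i=0$, and the reverse when $i=1$). Fixing a coordinate of a $0/1$-polytope to $0$ or to $1$ yields a face, and an intersection of faces is a face, so $\hat P^{(i,\tau)}(1)$ is a face of the integral polytope $\tilde P^{(i,\tau)}(1)$ and is therefore itself integral; thus $\hat P^{(i,\tau)}(1)=\conv\bigl(\hat P^{(i,\tau)}(1)\cap\ints^{2n+3}\bigr)$. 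Using that $f$ is linear, the lemma will follow once we know that $f$ maps $\hat P^{(i,\tau)}(1)\cap\ints^{2n+3}$ onto $Z^{(i,\tau)}$, since then
\[f\bigl(\hat P^{(i,\tau)}(1)\bigr)=f\bigl(\conv(\hat P^{(i,\tau)}(1)\cap\ints^{2n+3})\bigr)=\conv\bigl(f(\hat P^{(i,\tau)}(1)\cap\ints^{2n+3})\bigr)=\conv\bigl(Z^{(i,\tau)}\bigr).\]

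It remains to establish this correspondence. By the previous step the integer points of $\hat P^{(i,\tau)}(1)$ are the feasible non-cyclic sequences on $\{-1,\dots,n-1\}$ that additionally obey the boundary equations of $\hat P^{(i,\tau)}$; I would show that deleting $y_{-1},z_{-1},\lambda$ defines a bijection onto $Z^{(i,\tau)}$, with inverse given by appending $y_{-1}=y_{n-1}$, $z_{-1}=y_{-1}$ and $\lambda=1$. The point is that period $-1$ is a copy of period $n-1$: for $i=0$ the on-interval starting at $-1$ in the non-cyclic sequence, concatenated with the block $y_\tau=\dots=y_{n-1}=1$ fixed by $\hat P^{(0,\tau)}$, is exactly the wrap-around on-interval of a cyclic solution that switches on in period $\tau$, and the bounds $\alpha^{(0,\tau)}_{-1}=\max\{1,\tau+\alpha_\tau-(n-1)\}$, $\beta^{(0,\tau)}_{-1}=\tau+\beta_\tau-(n-1)$ are rigged so that its length lies in $[\alpha_\tau,\beta_\tau]$; the case $i=1$ is symmetric with $\gamma^{(1,\tau)}_{-1},\delta^{(1,\tau)}_{-1}$. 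I expect this last verification to be the main obstacle: it requires translating each of the implications~\eqref{eq:lower_bound_on}--\eqref{eq:switch_on} into the corresponding non-cyclic constraint on the horizon $\{-1,\dots,n-1\}$ and checking the interval arithmetic that produces the period-$(-1)$ bounds, much as in the proof of Proposition~\ref{prop:formulation} and of~\cite[Proposition~2]{QueyranneWolsey2017}. The remaining ingredients — the identification in the first paragraph, the face/integrality argument, and the convex-hull/projection step — are routine.
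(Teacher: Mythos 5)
Your proposal takes essentially the same route as the paper's proof: integrality of $\tilde P^{(i,\tau)}(1)$ from \cite[Theorem 2]{QueyranneWolsey2017}, preservation of integrality when the boundary variables are fixed to $0/1$ (the face argument), and the fact that the integer points of $\hat P^{(i,\tau)}(1)$ project exactly onto $Z^{(i,\tau)}$, so that linearity of $f$ yields the convex hull. The correspondence you single out as the remaining ``main obstacle'' is precisely what the paper itself asserts in one line (``$f(\hat P^{(i,\tau)}(1))$ is a formulation for $Z^{(i,\tau)}$'') without further detail, so your sketch, which additionally explains how the period-$(-1)$ bounds encode the wrap-around interval, is at least as complete as the published argument.
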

\begin{proof}
  The polytopes $\tilde P^{(i,\tau)}(1)$ are integral by~\cite[Theorem 2]{QueyranneWolsey2017}, and
  since fixing some binary variables does not destroy integrality, the polytopes $\hat P^{(i,\tau)}(1)$ are integral. The
  result follows since $f(\hat P^{(i,\tau)}(1))$ is a formulation for $Z^{(i,\tau)}$.
\end{proof}
We can now apply a result from disjunctive programming (see \cite{Jeroslow1984,Balas1985,Balas1998})
to obtain an extended formulation.
\begin{proposition}\label{prop:disjunctive_formulation}
  The polytope $\hat P\subseteq\reals^{2n+(2n+3)(\lvert T_0\rvert+\lvert T_1\rvert)}$ defined by the
  constraints
  \begin{align*}
    \sum_{i\in\{0,1\}}\sum_{\tau\in T_i}\lambda^{(i,\tau)} &= 1,\\
    \left(\vect y^{(i,\tau)},\,\vect z^{(i,\tau)},\,\lambda^{(i,\tau)}\right) &\in \hat P^{(i,\tau)} &&
                                                                                                   i\in\{0,1\},\,\tau\in T_i,\\
    \sum_{i\in\{0,1\}}\sum_{\tau\in T_i}y_t^{(i,\tau)}   &= y_t && t\in[0,n-1],\\
    \sum_{i\in\{0,1\}}\sum_{\tau\in T_i}z_t^{(i,\tau)}   &= z_t && t\in[0,n-1].
  \end{align*}
  provides an extended formulation for $Z$.  
\end{proposition}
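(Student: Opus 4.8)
The plan is to invoke the standard disjunctive-programming result (Balas's union of polytopes) applied to the partition of $Z$ given just before the statement. Concretely, we have already established in Lemma~\ref{lem:partition_of_Z} that $\conv(Z^{(i,\tau)}) = f(\hat P^{(i,\tau)}(1))$ for each $i\in\{0,1\}$, $\tau\in T_i$, and that $\hat P^{(i,\tau)}(1)$ is an integral polytope whose projection $f$ onto the $(\vect y,\vect z)$-space is exactly $\conv(Z^{(i,\tau)})$. Since $Z = \bigcup_{i,\tau} Z^{(i,\tau)}$, we have $\conv(Z) = \conv\bigl(\bigcup_{i,\tau}\conv(Z^{(i,\tau)})\bigr)$, so it suffices to show that $\hat P$ (after projecting out the auxiliary blocks) describes the convex hull of the union of the polytopes $f(\hat P^{(i,\tau)}(1))$.

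First I would recall the precise form of Balas's theorem: given bounded polytopes $P_1,\dots,P_m\subseteq\reals^d$ (all nonempty), their convex hull $\conv(P_1\cup\dots\cup P_m)$ equals the projection onto $\reals^d$ of the polytope in the lifted space consisting of all $(\vect x, \vect x^1,\dots,\vect x^m,\lambda_1,\dots,\lambda_m)$ with $\sum_j\lambda_j = 1$, $\lambda_j\geq 0$, $\vect x^j\in\lambda_j P_j$ (in homogenized form), and $\vect x = \sum_j \vect x^j$. The only subtlety in matching our setting is that the blocks $(\vect y^{(i,\tau)},\vect z^{(i,\tau)},\lambda^{(i,\tau)})\in\hat P^{(i,\tau)}$ already carry the scaling variable $\lambda^{(i,\tau)}$ as one of their own coordinates rather than as an external multiplier. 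So I would check that $\hat P^{(i,\tau)}$, as defined via the $\lambda$-homogenized constraint system $\tilde P^{(i,\tau)}$ with the additional equalities pinning the trailing $y$'s to $\lambda$ (resp.\ to $0$), is precisely the homogenization (cone over) of $\hat P^{(i,\tau)}(1)$ intersected with the slab $\lambda\in[0,1]$ — i.e.\ that $\hat P^{(i,\tau)} = \{\lambda\,(\vect y,\vect z,1) : (\vect y,\vect z,1)\in\hat P^{(i,\tau)}(1),\ \lambda\in[0,1]\}$. This is exactly the shape the Queyranne--Wolsey constraint system was engineered to have (every inequality is homogeneous of degree one in $(\vect y,\vect z,\lambda)$), and the equalities $y_\tau=\dots=y_{n-1}=y_{-1}=\lambda$ scale correctly, so this identification is routine.

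Granting that identification, the argument is a direct citation: $\hat P$ is exactly Balas's lifted polytope for the union of the $m=\lvert T_0\rvert+\lvert T_1\rvert$ polytopes $\hat P^{(i,\tau)}(1)$, where the multiplier associated with block $(i,\tau)$ is the coordinate $\lambda^{(i,\tau)}$ built into that block, the aggregation constraints $\sum_{i,\tau}y_t^{(i,\tau)}=y_t$ and $\sum_{i,\tau}z_t^{(i,\tau)}=z_t$ play the role of $\vect x=\sum_j\vect x^j$, and $\sum_{i,\tau}\lambda^{(i,\tau)}=1$ is the convexity constraint. Therefore $\proj_{y,z}(\hat P) = \conv\bigl(\bigcup_{i,\tau} f(\hat P^{(i,\tau)}(1))\bigr) = \conv\bigl(\bigcup_{i,\tau}\conv(Z^{(i,\tau)})\bigr) = \conv(Z)$, and since $\hat P$ is a polytope of dimension polynomial in $n$ (each block has $2n+3$ variables and there are at most $2n$ blocks), it is a valid compact extended formulation. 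I would also note the mild hypothesis needed for a clean application of Balas's theorem — that every block is nonempty, or else one restricts the union to the nonempty $Z^{(i,\tau)}$; if some $Z^{(i,\tau)}=\emptyset$ then $\hat P^{(i,\tau)}(1)=\emptyset$ and that block simply forces $\lambda^{(i,\tau)}=0$, so it can be dropped without changing the projection.

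The part requiring the most care is the bookkeeping in the previous paragraph's identification: making sure that the extra coordinate $y_{-1}$ (the copied period) and the block-specific parameters $\alpha_{-1}^{(i,\tau)},\dots,\delta_{-1}^{(i,\tau)}$ are handled so that $\hat P^{(i,\tau)}(1)$ really is a formulation for $Z^{(i,\tau)}$ embedded in $\reals^{2n}$ via $f$ — but this is precisely the content of Lemma~\ref{lem:partition_of_Z}, which I am entitled to assume, so in the write-up it reduces to one sentence. The remaining work is the verification that the constraint system of $\hat P^{(i,\tau)}$ is genuinely the cone/homogenization of $\hat P^{(i,\tau)}(1)$, i.e.\ that no inequality has a stray constant term after the substitution, which I would state as a one-line observation rather than belabor.
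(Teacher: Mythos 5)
Your proposal is correct and follows exactly the route the paper intends: the paper gives no separate proof, relying on the cited disjunctive-programming (Balas) result together with Lemma~\ref{lem:partition_of_Z}, which is precisely what you invoke. Your added checks --- that each $\hat P^{(i,\tau)}$ is the $\lambda$-homogenization of $\hat P^{(i,\tau)}(1)$ capped at $\lambda\leq 1$ (with the slice at $\lambda=0$ reduced to the origin by $0\leq y_t,z_t\leq\lambda$), and that empty pieces merely force $\lambda^{(i,\tau)}=0$ --- are exactly the bookkeeping the paper leaves implicit.
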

\section{Open problems}\label{sec:conclusion}
We conclude with some open problems. Trying to proceed along the lines of~\cite{QueyranneWolsey2017}, it is natural to consider the following
two problems.
\begin{problem}
  Characterize the convex hull of $\Z$ in terms of the $x$-variables, that is, determine the integer hull of the polytope $Q$.
\end{problem}
\begin{problem}
  Characterize the convex hull of $\Z$ in terms of the original $y$- and $z$-variables, that is,
  determine the integer hull of the polytope $P$. 
\end{problem}
In particular, in both cases we would like to know if the number of facets is polynomial. In the
small cases we have analyzed with \texttt{polymake}~\cite{Assarf2016} we observed that $\proj_{y,z}(Q)=P$. This motivates the
following question.
\begin{problem}
  Is it true that $\proj_{y,z}(Q)=P$ in general?
\end{problem}

\bigskip

\subsection*{Acknowledgment}
Thomas Kalinowski and Hamish Waterer are supported by the Australian Research Council and Aurizon
Network Pty Ltd under the grant LP140101000. The work of Tomas Lid\'en is performed as part of the
research project ``Efficient planning of railway infrastructure maintenance'', funded by the Swedish
Transport Administration with the grant TRV 2013/55886 and conducted within the national research
program ``Capacity in the Railway Traffic System''.

\printbibliography

\end{document}